\date{}
\newtheorem{lemma}{Lemma}
\newtheorem{theorem}{THEOREM}
\theoremstyle{definition}
\newtheorem{example}{Example}
\newtheorem{remark}{Remark}
\newtheorem*{acknowledgement}{Acknowledgements}
\begin{document}

\title{{\bf Boundedness of the Riesz potential in central Morrey--Orlicz spaces}}
\author {Evgeniya Burtseva, Lech Maligranda and Katsuo Matsuoka}

\maketitle

\footnotetext[1]{2010 {\it Mathematics Subject Classification}: Primary 46E30, 42B20; Secondary 42B35}
\footnotetext[2]{{\it Key words and phrases}: Riesz potential, Orlicz functions, Orlicz spaces, Morrey--Orlicz 
spaces, central Morrey--Orlicz spaces, weak central Morrey--Orlicz spaces}

\begin{abstract}
\noindent {\footnotesize Boundedness of the maximal function and the Calder\'on--Zygmund singular integrals 
in central Morrey--Orlicz spaces were proved in papers by the second and third authors. The weak-type 
estimates have also been proven. Here we show boundedness of the Riesz potential in central 
Morrey--Orlicz spaces and the corresponding weak-type version.}
\end{abstract}

\section{Orlicz spaces and central Morrey--Orlicz spaces} \label{sec:central}

First of all, we recall the definition of Orlicz spaces on $\mathbb R^n$ and some of their properties to be used later on 
(see \cite{KR61} and \cite{Ma89} for details). 

A function $\Phi \colon [0,\infty) \to [0,\infty)$ is called an {\it Orlicz function}, if it is an increasing continuous 
and convex function with $\Phi(0)=0$. Each such a function $\Phi$ has an integral representation 
$\Phi(u) = \int_0^u \Phi^{\prime}_{+}(t) \,dt$, where the right-derivative $\Phi^{\prime}_{+}$ is a nondecreasing 
right-continuous function (see \cite[Theorem 1.1]{KR61}). We will write below estimates for everywhere 
differentiable Orlicz function $\Phi$, but then using the above integral representation, these estimates will be 
true for almost all $u > 0$ with its right-derivative $\Phi^{\prime}_+$ instead of derivative $\Phi^{\prime}$. 
Of course, we have estimates
\begin{equation} \label{PP1}
\Phi(u) \leq u\, \Phi^{\prime}(u) \leq \Phi(2u)  ~~ {\rm for ~ all} ~~ u > 0.
\end{equation} 

If we want to include in the Orlicz spaces, for example, spaces $L^{\infty}({\mathbb R^n}), L^p ({\mathbb R^n}) 
\cap L^{\infty}({\mathbb R^n})$ and $L^p ({\mathbb R^n}) + L^{\infty}({\mathbb R^n})$ for $1 \leq p < \infty$, 
then we need to consider a broader class of functions than Orlicz's functions, the so-called Young functions. 
A function $\Phi \colon [0,\infty) \to [0,\infty]$ is called a {\it Young function}, if it is a nondecreasing convex 
function with $\lim_{u \to 0^+}\Phi(u) = \Phi(0) = 0$, and not identically $0$ or $\infty$ in $(0,\infty)$. 
It may have jump up to $\infty$ at some point $u>0$, but then it should be left continuous at $u$. 

Let $(\Omega, \Sigma, \mu)$ be a $\sigma$-finite complete nonatomic measure space and 
$L^0(\Omega)$ be the space of all $\mu$-equivalent classes of real-valued and $\Sigma$-measurable functions defined 
on $\Omega$. 

For any Young function $\Phi$, the {\it Orlicz space} $L^{\Phi}(\Omega)$, which contains all $f \in L^0(\Omega)$ 
such that $\int_{\Omega} \Phi( \varepsilon |f(x)|) \, d\mu(x) < \infty$ for some 
$ \varepsilon = \varepsilon(f) > 0$ with the {\it Luxemburg--Nakano norm} 
\begin{equation} \label{LNnorm}
\|f\|_{L^{\Phi}} = \inf \left\{\varepsilon > 0 \colon \int_{\Omega} \Phi \big (\frac {|f(x)|}{\varepsilon} \big) 
\, d\mu(x) \leq 1 \right\},  
\end{equation}
is a Banach space (cf. \cite[pp. 70--71]{KR61}, \cite[pp. 15--16]{Ma89}, \cite[pp. 125--127]{Ma11} and \cite[pp. 67--68]{RR91}). 
The {\it fundamental function} of the Orlicz space $L^{\Phi}(\Omega)$ is 
$$
\varphi_{L^{\Phi}(\Omega)}(t) = \|\chi_A \|_{L^{\Phi}(\Omega)} =  \|\chi_{[0, \mu(A)]} \|_{L^{\Phi}([0, \infty))} = 1/{\Phi^{-1}(1/t)},
$$
where $\chi_{A}$ is the characteristic function of the set $A \subset \Omega, t = \mu(A)$ and $\Phi^{-1}$ is the right-continuous
inverse of $\Phi$ defined by $\Phi^{-1}(v) = \inf \, \{u \ge 0 \colon \Phi(u) > v\}$ with $\inf \, \emptyset = \infty$. 

To each Young function $\Phi$ one can associate another convex function $\Phi^*$, i.e., the {\it complementary function} to 
$\Phi$, which is defined by 
$$
\Phi^* (v) = \sup_{u>0} \, [uv - \Phi(u)] ~~ {\rm for} ~~ v \geq 0. 
$$
Then $\Phi^*$ is also a Young function and $\Phi^{**} = \Phi$. Note that $u \leq \Phi^{-1}(u) \Phi^{*^{-1}}(u) \leq 2 u$ for all $u > 0$. 

We say that a Young function $\Phi $ satisfies the $\Delta_2$-{\it condition} and we write shortly $\Phi \in \Delta_2$, if 
$0< \Phi(u) < \infty$ for $u>0$ and there exists a constant $D_2 \geq 1$ such that 
\begin{equation} \label{D2}
\Phi(2u) \leq D_2 \Phi(u) ~~ {\rm for ~all} ~~ \ u > 0. 
\end{equation}

In this paper we consider Orlicz spaces $L^{\Phi}(\mathbb R^n)$ on $\mathbb R^n$ with the Lebesgue measure.
Then we define the Morrey--Orlicz spaces $M^{\Phi, \lambda}({\mathbb R^n})$ and central Morrey--Orlicz spaces 
$M^{\Phi, \lambda}(0)$. In the 2000s, several authors (for example, F. Deringoz, V. S. Guliyev, J. J. Hasanov, 
T.~Mizuhara, E. Nakai, S. Samko, Y. Sawano, H. Tanaka and others) defined Orlicz versions of the Morrey space, 
i.e., Morrey--Orlicz spaces, and investigated the boundedness for the Hardy--Littlewood maximal operator and other 
operators on them (see, for example, \cite{DGNSS19}, \cite{GD14}, \cite{GDH17}, \cite{Na04}, \cite{SST12} and 
the references therein). The Orlicz version of central Morrey spaces, i.e., central Morrey--Orlicz spaces were defined 
in papers by the second and third authors. They investigated boundedness on central Morrey--Orlicz spaces of the 
Hardy--Littlewood maximal operator in paper \cite{MM15} and also boundedness of the Calder\'on--Zygmund singular 
integrals on them in paper \cite{MM20}. In this paper we present conditions under which the Riesz potential
is bounded on central Morrey--Orlicz spaces.

For any Young function $\Phi$, number $\lambda \in \mathbb R$, a set $A \subset \mathbb R^n$ with $0 < |A| < \infty$ 
and for $f \in L^0({\mathbb R^n})$ let
$$
\|f\|_{\Phi, \lambda, A} = \inf \big\{ \varepsilon > 0\colon \dfrac 1 {|A|^{\lambda}} \int_{A} \Phi\big(\dfrac {|f(x)|} \varepsilon \big) 
\,dx \leq 1 \big\}, 
$$
and the corresponding (smaller) expression 
$$
\|f\|_{\Phi, \lambda, A, \infty} = \inf \big\{ \varepsilon > 0 \colon \sup_{u>0} \Phi(\dfrac{u}{\varepsilon}) \, \dfrac 1 {|A|^{\lambda}} \, d(f\chi_{A}, u) 
\leq 1\big\}, 
$$
where $d(f, u) = | \{x \in {\mathbb R^n} \colon |f(x)| > u \}|$. Note that $\|f\|_{\Phi, \lambda, A, \infty} \leq \|f\|_{\Phi, \lambda, A} $
provided that the expression on the right is finite. In fact, if $\|f\|_{\Phi, \lambda, A} < c$, then for arbitrary $u > 0$ we have
$$
1 \geq \dfrac{1}{|A|^{\lambda}} \int_{A} \Phi\big(\dfrac {|f(x)|}{c} \big) \,dx \geq 
\dfrac{1}{|A|^{\lambda}} \int_{\{x \in A \colon |f(x)| > u\}} \Phi\big(\dfrac {|f(x)|}{c} \big) \,dx 
\geq \dfrac{1}{|A|^{\lambda}} \,\Phi(\dfrac{u}{c}) \, d(f\chi_{A}, u),
$$
and $\|f\|_{\Phi, \lambda, A, \infty} \leq c$. Hence, $\|f\|_{\Phi, \lambda, A, \infty} \leq \|f\|_{\Phi, \lambda, A}$.

Using these notions and considering open balls $B(x_0, r)$ with a center at $x_0 \in \mathbb R^n$ and radius $r > 0$, i.e.
$B(x_0, r) = \{x \in {\mathbb R^n} \colon |x - x_0| < r\}$, and also open balls $B(0, r) = B_r$ with a center at $0$ we can define
{\it Morrey--Orlicz spaces} $M^{\Phi, \lambda}({\mathbb R^n})$ and {\it weak Morrey--Orlicz spaces} 
$WM^{\Phi, \lambda}({\mathbb R^n})$:
\begin{equation} \label{MOnorm}
M^{\Phi, \lambda}({\mathbb R^n}) = \left\{ f \in L^1_{loc}({\mathbb R^n})\colon \|f\|_{M^{\Phi, \lambda}} = 
\sup_{x_0 \in {\mathbb R^n}, r > 0} \|f\|_{\Phi, \lambda, B(x_0, r)} < \infty \right\} 
\end{equation}
and
\begin{equation} \label{wMOnorm}
WM^{\Phi,\lambda}({\mathbb R^n}) = \left\{ f \in L^1_{loc}({\mathbb R^n})\colon \|f\|_{WM^{\Phi,\lambda}} = 
\sup_{x_0 \in {\mathbb R^n}, r > 0} \|f\|_{\Phi, \lambda, B(x_0, r), \infty} < \infty \right\}. 
\end{equation}
Similarly, we can define {\it central Morrey--Orlicz spaces} $M^{\Phi, \lambda}(0)$ and {\it weak central Morrey--Orlicz spaces} 
$WM^{\Phi, \lambda}(0)$:
\begin{equation} \label{CMOnorm}
M^{\Phi, \lambda}(0) = \left\{ f \in L^1_{loc}({\mathbb R^n})\colon \|f\|_{M^{\Phi, \lambda}(0)} = \sup_{r > 0} \|f\|_{\Phi, \lambda, B_r} 
< \infty \right\} 
\end{equation}
and
\begin{equation} \label{wCMOnorm}
WM^{\Phi,\lambda}(0) = \left\{ f \in L^1_{loc}({\mathbb R^n})\colon \|f\|_{WM^{\Phi,\lambda}(0)} = 
\sup_{r > 0} \|f\|_{\Phi, \lambda, B_r, \infty} < \infty \right\}. 
\end{equation}

All these spaces are Banach ideal spaces on ${\mathbb R^n}$ (sometimes they are $\{0\}$, that is, they contain 
only all functions equivalent to $0$ on $\mathbb R^n$). Moreover, we have continuous embeddings 
$M^{\Phi, \lambda}({\mathbb R^n}) \overset{1}{\hookrightarrow} WM^{\Phi, \lambda}({\mathbb R^n}), 
M^{\Phi, \lambda}(0) \overset{1}{\hookrightarrow} WM^{\Phi, \lambda}(0)$ and also 
$M^{\Phi, \lambda}({\mathbb R^n}) \overset{1}{\hookrightarrow} M^{\Phi, \lambda}(0), \newline
WM^{\Phi, \lambda}({\mathbb R^n})\overset{1}{\hookrightarrow} WM^{\Phi, \lambda}(0)$.

Let us recall that the normed subspace $X = (X, \| \cdot\|_X)$ of $L^0(\Omega)$ is an {\it ideal space} 
on $\Omega$: if $f, g \in X$ with $|f(x)| \leq |g(x)|$ for $\mu$-almost all $x \in \Omega$, and $g \in X$, then 
$f \in X$ and $\| f\|_X \leq \| g\|_X$. 
Here and further, for two Banach ideal spaces $X$ and $Y$, we use the symbol $X\hookrightarrow Y$ rather than 
$X\subset Y$ for continuous embedding. Moreover, the symbol $X\overset{C}{\hookrightarrow }Y$ indicates that 
$X\hookrightarrow Y$ with the norm of the embedding operator not bigger than $C$, i.e., $\| f\|_{Y} \leq C\, \|f\|_{X}$ 
for all $f\in X$. 

Note that Morrey--Orlicz spaces and central Morrey--Orlicz spaces are generalizations of Orlicz spaces and Morrey 
spaces (on ${\mathbb R^n}$). In particular, we can obtain the following spaces (see \cite{MM15} for more details): 
\begin{enumerate}
\item[$({\rm i})$] (Orlicz and weak Orlicz spaces) If $\lambda = 0$, then 
$$
M^{\Phi, 0}({\mathbb R^n}) = M^{\Phi, 0}(0) = L^{\Phi}({\mathbb R^n}) \ \ {\text{\rm{and}}} \ \ 
   WM^{\Phi, 0}({\mathbb R^n}) = WM^{\Phi, 0}(0) = WL^{\Phi}({\mathbb R^n}).
$$
\item[$({\rm ii})$] (Beurling--Orlicz and weak Beurling--Orlicz spaces) If $\lambda = 1$, then 
$$
M^{\Phi, 1}({\mathbb R^n}) = B^{\Phi}({\mathbb R^n}) \ \ {\text{\rm{and}}} \ \ 
               WM^{\Phi, 1}({\mathbb R^n}) = WB^{\Phi}({\mathbb R^n}). 
$$
As for $B^{\Phi}({\mathbb R^n})$ and $WB^{\Phi}({\mathbb R^n})$, see \cite{MM15}.

\item[$({\rm iii})$] (classical Morrey, weak Morrey, central Morrey and weak central Morrey spaces) If $\Phi(u) = u^p, 1 \le p< \infty$ 
and $\lambda \in \mathbb R$, then $M^{\Phi, \lambda}({\mathbb R^n}) = M^{p, \lambda}({\mathbb R^n}), 
WM^{\Phi,\lambda}({\mathbb R^n}) = WM^{p,\lambda}({\mathbb R^n})$ and $M^{\Phi, \lambda}(0) = 
M^{p, \lambda}(0), WM^{\Phi,\lambda}(0) = WM^{p,\lambda}(0)$.
\newline Here $M^{p, \lambda}({\mathbb R^n})$, $WM^{p, \lambda}({\mathbb R^n})$, $M^{p, \lambda}(0)$, 
$WM^{p, \lambda}(0)$ are the classical Morrey, weak Morrey, central Morrey and weak central Morrey spaces, 
respectively. 
\end{enumerate}
We want to note that $M^{p, \lambda}({\mathbb R^n}) \neq \{0\}$ if and only if $0 \leq \lambda \leq 1$ (see \cite[Lemma 1]{BG04}) 
and $M^{p, \lambda}(0) \neq \{0\}$ if and only if $\lambda \geq 0$ (see \cite{Bu12}, \cite{BG04}, \cite{BJT}). Moreover, 
$M^{p, 0}({\mathbb R^n}) = M^{p, 0}(0) = L^p({\mathbb R^n})$ and $M^{p, 1}({\mathbb R^n}) = L^{\infty}({\mathbb R^n})$ (see 
\cite[Theorem 4.3.6]{KJF77}). However, $ L^{\infty}({\mathbb R^n})  \overset{1}{\hookrightarrow} M^{p, 1}(0)$ and the inclusion 
is strict. For example, in one-dimensional case $f(x) = \sum\limits_{n=0}^{\infty}2^{n/p} \chi_{[n, n+2^{-n}]}(|x|) \in M^{p, 1}(0)
 \setminus L^{\infty}({\mathbb R^1})$. Of course, for $0 \leq \lambda \leq 1$ the inclusion 
 $M^{p, \lambda}({\mathbb R^n}) \overset{1}{\hookrightarrow} M^{p, \lambda}(0)$ holds and is strict for $0 < \lambda \leq 1$ 
 (a suitable example we can find in \cite[p. 156]{GP18}). It is also true that if $1 \leq p < q < \infty, 0 \leq \mu < \lambda < 1$ and 
 $\frac{1 - \lambda}{p} = \frac{1 - \mu}{q}$, then 
 \begin{equation} \label{embeddings}
 M^{q, \mu}({\mathbb R^n}) \overset{1}{\hookrightarrow} M^{p, \lambda}({\mathbb R^n})  
 \quad {\rm and} \quad M^{q, \mu}(0) \overset{1}{\hookrightarrow} M^{p, \lambda}(0).
 \end{equation}
 Both inclusions are proper (see, for example, \cite{GHI18}); the second embedding in (\ref{embeddings}) is also true for $1 < \lambda < \mu$.
The embeddings (\ref{embeddings}) follow by the H\"older--Rogers inequality with $\frac{q}{p} > 1$, since for any 
$x_0 \in \mathbb R^n$ we have
\begin{eqnarray*}
\int_{B(x_0, r)} |f(x)|^p \, dx 
&\leq& 
\left(\int_{B(x_0, r)} |f(x)|^q \, dx \right)^{p/q} |B(x_0, r)|^{1 - p/q} \\
&=& 
\left(\frac{1}{|B(x_0, r)|^{\mu}} \int_{B(x_0, r)} |f(x)|^q \, dx \right)^{p/q} |B(x_0, r)|^{1 - p/q + \mu p/q}\\
&=& 
\left(\frac{1}{|B(x_0, r)|^{\mu}} \int_{B(x_0, r)} |f(x)|^q \, dx \right)^{p/q} |B(x_0, r)|^{\lambda},
\end{eqnarray*}
and from the fact that $1 - p/q + \mu p/q = (\mu - 1) \, p/q + 1 = - (1 - \lambda) + 1 = \lambda$.

\vspace{2mm}

If the supremum in definitions (\ref{MOnorm})--(\ref{wCMOnorm}) is taken over all $r > 1$, then we will have 
corresponding definitions of non-homogeneous Morrey--Orlicz spaces, non-homogeneous weak Morrey--Orlicz spaces, 
non-homogeneous central Morrey--Orlicz spaces and non-homogeneous weak central Morrey--Orlicz spaces.

\section{The Riesz potential in Lebesgue, Orlicz and Morrey spaces} \label{sec:Rp}

The {\it Riesz potential} of order $\alpha \in (0, n)$ of a locally integrable function $f \in {\mathbb R^n}, n \geq 1$, is defined as
\begin{equation} \label{Riesz}
I_{\alpha}f(x)=\int_{\mathbb R^n}\dfrac {f(y)} {|x-y|^{n-\alpha}} \,dy, \quad {\rm for} \, \,\, x \in \mathbb R^n.
\end{equation}
The linear operator $I_{\alpha}$ plays a role in various branches of analysis, including potential theory, harmonic analysis, 
Sobolev spaces and partial differential equations. Therefore, investigations of the boundedness of the operator $I_{\alpha}$ 
between different spaces are important.

The classical {\it Hardy--Littlewood--Sobolev theorem} states that if  $1 < p < q < \infty$, then a Riesz potential 
$I_{\alpha}$ is of strong-type $(p,q)$, that is, bounded from $L^p({\mathbb R^n})$ to $L^q({\mathbb R^n})$ 
if and only if $1/q=1/p-\alpha/n$. For $p = 1 < q < \infty$ Zygmund proved that $I_{\alpha}$ is of weak-type $(1, q)$, 
that is, bounded from $L^1({\mathbb R^n})$ to $WL^q({\mathbb R^n})$, where $1/q=1-\alpha/n$. 
The weak-$L^q$ space $WL^q({\mathbb R^n}) = L^{q, \infty}({\mathbb R^n})$, called also the Marcinkiewicz space, 
consists of all $f \in L^0({\mathbb R^n})$ such that the quasi-norm 
$\| f\| _{q, \infty} = \sup_{t > 0} t \, | \{x \in {\mathbb R^n} \colon |f(x)| > t \}|^{1/q}$ is finite.
The proofs of these results we can find in the books \cite[pp. 125--127]{Ga07}, \cite[pp. 2--5]{Gr09}, \cite[pp. 117--121]{St70}, 
\cite[pp. 150--154]{To86} and \cite[pp. 86--87]{Zi89}. 

The boundedness of $I_{\alpha}$ from an Orlicz space $L^{\Phi}(\mathbb R^n)$ to another Orlicz space $L^{\Psi}(\mathbb R^n)$ 
was studied by Simonenko (1964), O'Neil (1965) and Torchinsky (1976) under some restrictions on the Orlicz functions $\Phi$ 
and $\Psi$. In 1999 Cianchi \cite{Ci99} gave a necessary and sufficient condition for the boundedness of $I_{\alpha}$ 
from $L^{\Phi}(\mathbb R^n)$ to $L^{\Psi}(\mathbb R^n)$ and from $L^{\Phi}(\mathbb R^n)$ to weak Orlicz space 
$WL^{\Psi}(\mathbb R^n)$. Another sufficient conditions for boundedness of the Riesz operator $I_{\alpha}$ (and even 
for a generalized fractional operator $I_{\rho}$) were given in 2001 by Nakai \cite{Na01a}, \cite{Na01b}. Then in 2017, 
Guliyev--Deringoz--Hasanov in \cite[Theorem 3.3]{GDH17} gave more readable necessary and sufficient conditions 
for the boundedness of $I_{\alpha}$ from $L^{\Phi}(\mathbb R^n)$ to $WL^{\Psi}(\mathbb R^n)$ and from 
$L^{\Phi}(\mathbb R^n)$ to $L^{\Psi}(\mathbb R^n)$. 
 
Results concerning boundedness of the Riesz potential between Morrey spaces were first obtained by Spanne with 
the Sobolev exponent $1/q=1/p-\alpha/n$, and this result was published in 1969 by Peetre \cite{Pe69}: if 
$0 < \alpha < n, 1 < p < n(1 - \lambda)/\alpha, 0 < \lambda < 1, 1/q=1/p-\alpha/n$ and 
$\lambda/p = \mu/q$, then the Riesz potential $I_{\alpha}$ is bounded from 
$M^{p, \lambda}(\mathbb R^n)$ to $M^{q, \mu}(\mathbb R^n)$. Then in 1975 a stronger result was obtained by 
Adams \cite{Ad75}, and reproved by Chiarenza--Frasca \cite{CF87}. Adams proved boundedness of $I_{\alpha}$ from 
$M^{p, \lambda}(\mathbb R^n)$ to $M^{q_1, \lambda}(\mathbb R^n)$ with a better exponent $q_1$, namely $
1/q_1 = 1/p - \alpha/[n(1-\lambda)]$. Adams result is stronger than the Peetre--Spanne theorem because 
$q < q_1$ and $(1 - \mu)/q = (1 - \lambda)/q_1$, from which follows the embedding $M^{q_1, \lambda}(\mathbb R^n) 
\overset{1}{\hookrightarrow } M^{q, \mu}(\mathbb R^n)$ and this means that the target space 
$M^{q_1, \lambda}(\mathbb R^n)$ is smaller than target space $M^{q, \mu}(\mathbb R^n)$ in the Peetre--Spanne 
result.

Central Morrey spaces $M^{p, \lambda}(0)$ were first introduced in \cite[p. 607]{GH94} and in \cite[p. 5]{AGL00} (see 
also \cite[p. 257]{CL89} and \cite[p. 500]{GC89} for $\lambda = 1$). Result on the boundedness of the Riesz potential 
in these spaces was proved by Fu--Lin--Lu \cite[Proposition 1.1]{FLL08}: if 
$1 < p < n (1 - \lambda)/\alpha, 0 < \lambda < 1, 1/q = 1/p - \alpha/n$ 
and $\lambda/p = \mu/q$, then the Riesz potential $I_{\alpha}$ is bounded from $M^{p, \lambda}(0)$ to $M^{q, \mu}(0)$ 
(see also \cite{BGGM11}, where the result is proved even for more general local Morrey-type spaces). Komori-Furuya 
and Sato \cite[Proposition 1]{KS17} showed that Adams type result on boundedness in central Morrey spaces does 
not hold. They showed that if $\frac{1 - \mu}{q} = \frac{1 - \lambda}{p} - \frac{\alpha}{n}$ and $ \alpha/n < 1/p - 1/q < \alpha/[n(1 - \lambda)]$, 
then $I_{\alpha}$ is not bounded from $M^{p, \lambda}(0)$ to $M^{q, \mu}(0)$ because 
$\mu/q = \lambda/p - (1/p - \alpha/n - 1/q) < \lambda/p$.  

We will generalize the last results to central Morrey--Orlicz spaces. In Theorem 2, the necessary conditions for boundedness 
of $I_{\alpha}$ are given, and in Theorem 3 -- sufficient conditions are presented.

In the proof of boundedness of the Riesz potential in the central Morrey-Orlicz spaces we will need some necessary 
estimates. We will present them in the next section.
 
\section{Some technical results}

To prove the main results of this paper, we need some technical calculations. In order not to hide the main 
ideas in proofs of the main results we collect such calculations in Lemma~\ref{lemma1} below.

\begin{lemma}\label{lemma1} Let $\Phi$ be a Young function, $\Phi^*$ its complementary function, $0 \le \lambda \le 1$ and $r > 0$. 
Then
\begin{itemize} 
\item[$({\rm i})$] $\int_{B_r} |f(x)g(x)|\,dx \leq 2\, |B_r|^{\lambda}\, \|f\|_{\Phi, \lambda, B_r} \|g\|_{\Phi^*, \lambda, B_r}$.
\item[$({\rm ii})$] $\| \chi_{B(x_0, r_0)}\|_{\Phi^{\ast}, \lambda, B_r} \le \frac{|B_r \cap B(x_0, r_0)|}{|B_r|^\lambda}\Phi^{-1} 
\left(\frac{|B_r|^\lambda}{|B_r \cap B(x_0, r_0)|} \right) $, {\it where $B_r \cap B(x_0, r_0) \neq \emptyset$ for $x_0 \in \mathbb 
R^n$ and $r_0 > 0$.}

In particular, $\|\chi_{B_r} \|_{\Phi^*, \lambda, B_r} \leq \dfrac {\Phi^{-1} \left( |B_r|^{\lambda-1} \right)} {|B_r|^{\lambda-1}}.$
\item[$({\rm iii})$] $\| \chi_{B_t}\|_{\Phi, \lambda, B_r} = 1/ {\Phi^{-1} \left(\frac{|B_r|^\lambda}{|B_r \cap B_t|}\right) }$ and 
$\|\chi_{B_t}\|_{M^{\Phi, \lambda} (0)} = \dfrac{1}{\Phi^{-1}(|B_t|^{\lambda - 1})}$ {\it for any $t > 0$.}
\item[$({\rm iv})$] $\| \chi_{B_t}\|_{\Phi, \lambda, B_r, \infty} = 1/ {\Phi^{-1} \left(\frac{|B_r|^\lambda}{|B_r \cap B_t|}\right) }$ and 
$\|\chi_{B_t}\|_{WM^{\Phi, \lambda} (0)} = \dfrac{1}{\Phi^{-1}(|B_t|^{\lambda - 1})}$ {\it for any $t > 0$.}
\end{itemize}
\end{lemma}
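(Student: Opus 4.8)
The plan is to reduce (i) to the pointwise Young inequality and to reduce (ii)--(iv) to exact evaluations of the local modular for characteristic functions. For (i), I would begin from $\tfrac{|f(x)|}{\varepsilon}\,\tfrac{|g(x)|}{\delta}\le \Phi(|f(x)|/\varepsilon)+\Phi^{*}(|g(x)|/\delta)$, which holds pointwise because $\Phi^{*}$ is the complementary function of $\Phi$. Choosing $\varepsilon>\|f\|_{\Phi,\lambda,B_r}$ and $\delta>\|g\|_{\Phi^{*},\lambda,B_r}$, integrating over $B_r$, and using that both normalized modulars are at most $1$ (they decrease as $\varepsilon,\delta$ grow), I obtain $\tfrac{1}{\varepsilon\delta}\int_{B_r}|fg|\,dx\le 2|B_r|^{\lambda}$; letting $\varepsilon\downarrow\|f\|_{\Phi,\lambda,B_r}$ and $\delta\downarrow\|g\|_{\Phi^{*},\lambda,B_r}$ gives (i).

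For (iii) the decisive simplification is $\chi_{B_t}\chi_{B_r}=\chi_{B_r\cap B_t}$, so that for every $\varepsilon>0$
$$\frac{1}{|B_r|^{\lambda}}\int_{B_r}\Phi\big(\chi_{B_t}(x)/\varepsilon\big)\,dx=\frac{|B_r\cap B_t|}{|B_r|^{\lambda}}\,\Phi(1/\varepsilon).$$
This is at most $1$ precisely when $1/\varepsilon\le \Phi^{-1}\big(|B_r|^{\lambda}/|B_r\cap B_t|\big)$, which identifies $\|\chi_{B_t}\|_{\Phi,\lambda,B_r}$ with the stated value (it is just the fundamental-function computation $\varphi_{L^{\Phi}}(t)=1/\Phi^{-1}(1/t)$ relative to the localized modular). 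For (iv) I would note that the distribution function of $\chi_{B_r\cap B_t}$ equals $|B_r\cap B_t|$ for $0<u<1$ and vanishes for $u\ge 1$, so by left-continuity of $\Phi$ one has $\sup_{u>0}\Phi(u/\varepsilon)\,d(\chi_{B_t}\chi_{B_r},u)=\Phi(1/\varepsilon)\,|B_r\cap B_t|$; hence the weak condition coincides with the strong one and the two local quantities agree. To pass to the central norms I take the supremum over $r>0$: for $r\le t$ the argument of $\Phi^{-1}$ equals $|B_r|^{\lambda-1}$ and for $r>t$ it equals $|B_r|^{\lambda}/|B_t|$. Because $0\le\lambda\le 1$, the former is nonincreasing and the latter nondecreasing in $r$, so the argument is minimized at $r=t$ with value $|B_t|^{\lambda-1}$; since $1/\Phi^{-1}$ is nonincreasing, the supremum is attained at $r=t$ and equals $1/\Phi^{-1}(|B_t|^{\lambda-1})$, proving both the $M^{\Phi,\lambda}(0)$ and the $WM^{\Phi,\lambda}(0)$ formulas.

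For (ii) I would run the same modular computation as in (iii), but with $\Phi^{*}$ in place of $\Phi$ and $B(x_0,r_0)$ in place of $B_t$. Writing $m=|B_r\cap B(x_0,r_0)|$, this gives the exact value $\|\chi_{B(x_0,r_0)}\|_{\Phi^{*},\lambda,B_r}=1/(\Phi^{*})^{-1}\big(|B_r|^{\lambda}/m\big)$. The stated upper bound then follows at once from the inequality $u\le \Phi^{-1}(u)\,(\Phi^{*})^{-1}(u)$ recorded in the excerpt, applied with $u=|B_r|^{\lambda}/m$, which yields $1/(\Phi^{*})^{-1}\big(|B_r|^{\lambda}/m\big)\le (m/|B_r|^{\lambda})\,\Phi^{-1}\big(|B_r|^{\lambda}/m\big)$. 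Taking $x_0=0$ and $r_0=r$ gives $m=|B_r|$ and $|B_r|^{\lambda}/m=|B_r|^{\lambda-1}$, reproducing the displayed particular case.

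The only genuinely delicate step is the monotonicity argument governing the supremum over $r$ in (iii) and (iv); this is exactly where the hypothesis $0\le\lambda\le 1$ enters, through the signs $\lambda-1\le 0$ and $\lambda\ge 0$. Everything else is either the pointwise Young inequality or a direct evaluation of the modular of a characteristic function, so I expect no further technical difficulty.
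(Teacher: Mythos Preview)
Your proposal is correct and follows essentially the same route as the paper. The only cosmetic difference is in (ii): the paper verifies the upper bound directly by plugging the candidate $\varepsilon$ into the $\Phi^{*}$-modular and invoking $\Phi^{*}\big(u/\Phi^{-1}(u)\big)\le u$, whereas you first compute the exact local norm $1/(\Phi^{*})^{-1}(|B_r|^{\lambda}/m)$ (by the same argument as in (iii)) and then bound it via $u\le \Phi^{-1}(u)\,(\Phi^{*})^{-1}(u)$, which is the same inequality in equivalent form.
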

\begin{proof} (i) This estimate was proved in \cite[Lemma 2.6]{MM20}. 

(ii) Since for $u > 0$ we have $\Phi^{\ast} \left(\frac{u}{\Phi^{-1}(u)} \right) \le u$ (cf. Lemma 2.6 in \cite{MM20}) it 
follows for $u = \frac{|B_r|^\lambda}{|B_r \cap B(x_0, r_0)|}$ that 
$$
\int_{B_r} \Phi^{\ast} (\frac{\chi_{B(x_0, r_0)}(x) |B_r|^\lambda}{\Phi^{-1}
\left(\frac{|B_r|^\lambda}{|B_r \cap B(x_0, r_0)|} \right) |B_r \cap B(x_0, r_0)|} ) \, dx
$$
$$
= \int_{B_r \cap B(x_0, r_0)} \Phi^{\ast} (\frac{|B_r|^\lambda}{\Phi^{-1} (\frac{|B_r|^\lambda}{|B_r \cap B(x_0, r_0)|}) 
|B_r \cap B(x_0, r_0)|}) \, dx
$$
$$ \le \frac{|B_r|^\lambda}{|B_r \cap B(x_0, r_0)|}  \int_{B_r \cap B(x_0, r_0)} dx = |B_r|^\lambda.
$$
Hence, $\| \chi_{B(x_0, r_0)}\|_{\Phi^{\ast}, \lambda, B_r} \le \Phi^{-1} \left(\frac{|B_r|^\lambda}{|B_r \cap B(x_0, r_0)|} \right) 
\frac{|B_r \cap B(x_0, r_0)|}{|B_r|^\lambda}$, 
and (ii) follows.

(iii) Let $t > 0$. Since $\Phi (\Phi^{-1} (u)) \leq u$ for any $u > 0$  it follows that
$$
\int\limits_{B_r} \Phi \left(\chi_{B_t}(x) \, \Phi^{-1} (\frac{|B_r|^\lambda}{|B_r \cap B_t|}) \right) dx 
= \int\limits_{B_r \cap B_t} \Phi \left( \Phi^{-1} (\frac{|B_r|^\lambda}{|B_r \cap B_t|}) \right) dx 
$$
$$
 \leq \int\limits_{B_r \cap B_t} \frac{|B_r|^\lambda \, }{|B_r \cap B_t|} dx = |B_r|^\lambda,
$$
and so $\| \chi_{B_t}\|_{\Phi, \lambda, B_r} \leq 1/ {\Phi^{-1} \left(\frac{|B_r|^\lambda}{|B_r \cap B_t|}\right) }$. 
On the other hand, 
$$
1 \geq \frac{1}{|B_r|^\lambda} \int\limits_{B_r} \Phi \left(\frac{\chi_{B_t}(x)}{\| \chi_{B_t}\|_{\Phi, \lambda, B_r}} \right) dx 
= \Phi \left(\frac{1}{\| \chi_{B_t}\|_{\Phi, \lambda, B_r}} \right) \frac{|B_r \cap B_t|}{|B_r|^\lambda},
$$
or
$$
\frac{|B_r|^\lambda}{|B_r \cap B_t|} \geq \Phi (\frac{1}{\| \chi_{B_t}\|_{\Phi, \lambda, B_r}} ). 
$$
Since $u \leq \Phi^{-1} (\Phi (u))$ for any $u > 0$ such that $\Phi (u) < \infty$ we obtain
$$
\Phi^{-1} \left(\frac{|B_r|^\lambda}{|B_r \cap B_t|} \right)\geq \Phi^{-1} \left(\Phi (\frac{1}{\| \chi_{B_t}\|_{\Phi, \lambda, B_r}}) \right) 
\geq \frac{1}{\| \chi_{B_t}\|_{\Phi, \lambda, B_r}}, 
$$
which together with the previous estimate gives equality 
$\| \chi_{B_t}\|_{\Phi, \lambda, B_r} = 1/ {\Phi^{-1} \left(\frac{|B_r|^\lambda}{|B_r \cap B_t|}\right) }$.
Thus,
\begin{eqnarray*}
\|\chi_{B_t} \|_{M^{\Phi, \lambda}(0)} 
&=& 
\sup\limits_{r>0} \| \chi_{B_t}\|_{\Phi, \lambda, B_r} 
= \sup\limits_{r>0} \frac{1}{ {\Phi^{-1} \left(\frac{|B_r|^\lambda}{|B_r \cap B_t|}\right) }} \\
&=& \max \Big[ \sup\limits_{r\le t} \frac{1}{\Phi^{-1} \left(\frac{|B_r|^\lambda}{|B_r \cap B_t|} \right)}, 
\,\sup\limits_{r\geq t} \frac{1}{\Phi^{-1} \left(\frac{|B_r|^\lambda}{|B_r \cap B_t|} \right)} \Big] \\
&=& \max \Big[ \sup\limits_{r\le t} \frac{1}{\Phi^{-1} \left(|B_r|^{\lambda-1} \right)}, \,
\sup\limits_{r\geq t} \frac{1}{\Phi^{-1} \left(\frac{|B_r|^\lambda}{|B_t|} \right)} \Big] = \dfrac{1}{\Phi^{-1}(|B_t|^{\lambda - 1})},
\end{eqnarray*}
and point (iii) of the lemma has been proved.

(iv) For $t > 0$ we have
\begin{equation*}
\sup_{u>0} \Phi(\dfrac{u}{\varepsilon}) \, \dfrac {1}{|B_r|^{\lambda}} \, |\{x \in B_r \colon \chi_{B_t} (x) > u\}| 
= \sup_{0 < u < 1} \Phi(\dfrac{u}{\varepsilon}) \, \dfrac {|B_r \cap B_t|} {|B_r|^{\lambda}} 
= \Phi(\dfrac{1}{\varepsilon}) \, \dfrac {|B_r \cap B_t|} {|B_r|^{\lambda}}.
\end{equation*}
Thus,
\begin{eqnarray*}
\| \chi_{B_t}\|_{\Phi, \lambda, B_r, \infty} 
&=& 
\inf \{\varepsilon > 0 \colon \Phi(\frac{1}{\varepsilon}) \, \dfrac {|B_r \cap B_t|} {|B_r|^{\lambda}} \leq 1 \} \\
&\leq& 
\inf \{\varepsilon > 0 \colon \frac{1}{\varepsilon} \leq  \Phi^{-1}(\frac{|B_r|^{\lambda}}{|B_r \cap B_t|}) \} 
\leq 1/ {\Phi^{-1} (\frac{|B_r|^\lambda}{|B_r \cap B_t|}) },
\end{eqnarray*}
because $1/\varepsilon \leq  \Phi^{-1}(\Phi (1/\varepsilon))$. On the other hand, since 
$1 \geq \Phi (\frac{1}{\| \chi_{B_t}\|_{\Phi, \lambda, B_r, \infty} }) \frac{|B_r \cap B_t|}{|B_r|^\lambda}$ 
it follows that 
$$ 
\frac{1}{\| \chi_{B_t}\|_{\Phi, \lambda, B_r, \infty} } \leq \Phi^{-1} \left(\Phi( \frac{1}{\| \chi_{B_t}\|_{\Phi, \lambda, B_r, \infty} }) \right) 
\leq \Phi^{-1} \left( \frac{|B_r|^\lambda}{|B_r \cap B_t|} \right), 
$$
which together gives the first equality in (iv). The second equality in (iv) has the same proof as the second equality in (iii).
\end{proof}

\section{On the norm of the dilation operator in central \newline Morrey--Orlicz spaces}

For any $a > 0$ and $x\in \mathbb{R}^n$ we define the {\it dilation operator} $D_a$ by 
$$
D_af(x) = f(ax), \, \, f \in L^0( \mathbb{R}^n).
$$
The dilation operator is bounded in central Morrey--Orlicz spaces $M^{\Phi,\lambda} (0)$ and we will calculate its norm. 
For this purpose quantity $s_{\Phi^{-1}} $ is needed for the Orlicz function $\Phi$:
\begin{equation}\label{def6}
s_{\Phi^{-1}} (t) = \sup\limits_{s>0} \frac{\Phi^{-1}(st)}{\Phi^{-1}(s)} , \qquad~ t > 0.
\end{equation} 

\begin{theorem}\label{dilationOperator}
If $\Phi$ is an Orlicz function, $0 \le \lambda \le 1$ and $a > 0$, then the operator norm of $D_a$ is
\begin{equation}\label{dilOpNorm}
\|D_a\|_{M^{\Phi,\lambda} (0) \rightarrow M^{\Phi,\lambda}(0)}  =  s_{\Phi^{-1}} \left(a^{n(\lambda-1)}\right). 
\end{equation}
\end{theorem}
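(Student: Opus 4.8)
The plan is to prove the two inequalities $\|D_a\|_{M^{\Phi,\lambda}(0)\to M^{\Phi,\lambda}(0)}\le s_{\Phi^{-1}}(a^{n(\lambda-1)})$ and $\|D_a\|_{M^{\Phi,\lambda}(0)\to M^{\Phi,\lambda}(0)}\ge s_{\Phi^{-1}}(a^{n(\lambda-1)})$ separately. The computational backbone of both is the change of variables $y=ax$ in the local modular. Writing $t=a^{n(\lambda-1)}$ and using $|B_{ar}|=a^{n}|B_r|$, a direct substitution gives, for every $f$, every $r>0$ and every $\varepsilon>0$,
$$
\frac{1}{|B_r|^{\lambda}}\int_{B_r}\Phi\Big(\frac{|f(ax)|}{\varepsilon}\Big)\,dx
= t\,\frac{1}{|B_{ar}|^{\lambda}}\int_{B_{ar}}\Phi\Big(\frac{|f(y)|}{\varepsilon}\Big)\,dy .
$$
Hence $\|D_af\|_{\Phi,\lambda,B_r}$ equals the Luxemburg functional over $B_{ar}$ in which the admissibility threshold $1$ is replaced by $1/t$. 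Since $\{ar:r>0\}=(0,\infty)$, after taking the supremum over $r$ the whole problem reduces to comparing, on a single ball, the modular norm at level $1/t$ with the one at level $1$.

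For the upper bound I would establish the pointwise modular inequality $t\,\Phi(u)\le\Phi\big(s_{\Phi^{-1}}(t)\,u\big)$ for all $u>0$, which I expect to be the main obstacle. Set $c=s_{\Phi^{-1}}(t)$, so that $\Phi^{-1}(ts)\le c\,\Phi^{-1}(s)$ for every $s>0$ by the definition (\ref{def6}). Arguing by contradiction, suppose $\Phi(cu)<t\,\Phi(u)$ for some $u$, and apply this with $s=\Phi(cu)/t$; using $\Phi^{-1}(\Phi(cu))\ge cu$ one obtains $u\le\Phi^{-1}\big(\Phi(cu)/t\big)$, while $\Phi(cu)/t<\Phi(u)$. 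But $\Phi^{-1}(s)\ge u$ means $\Phi(w)\le s$ for all $w<u$, whence $\Phi(u)\le s$ by letting $w\to u^{-}$ and invoking continuity of the Orlicz function $\Phi$; this contradicts $s<\Phi(u)$. Granting the modular inequality, for any $\varepsilon$ with $\frac{1}{|B_{ar}|^{\lambda}}\int_{B_{ar}}\Phi(|f|/\varepsilon)\,dy\le1$ the rescaled choice $c\varepsilon$ gives $t\,\frac{1}{|B_{ar}|^{\lambda}}\int_{B_{ar}}\Phi(|f|/(c\varepsilon))\,dy\le\frac{1}{|B_{ar}|^{\lambda}}\int_{B_{ar}}\Phi(|f|/\varepsilon)\,dy\le1$, so that $\|D_af\|_{\Phi,\lambda,B_r}\le c\,\|f\|_{\Phi,\lambda,B_{ar}}\le c\,\|f\|_{M^{\Phi,\lambda}(0)}$. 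Taking the supremum over $r$ yields $\|D_a\|\le s_{\Phi^{-1}}(t)$.

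For the lower bound I would test $D_a$ on the characteristic functions $\chi_{B_\rho}$. Since $D_a\chi_{B_\rho}=\chi_{B_{\rho/a}}$ and Lemma~\ref{lemma1}(iii) gives $\|\chi_{B_\rho}\|_{M^{\Phi,\lambda}(0)}=1/\Phi^{-1}(|B_\rho|^{\lambda-1})$, one computes, using $|B_{\rho/a}|=a^{-n}|B_\rho|$,
$$
\frac{\|D_a\chi_{B_\rho}\|_{M^{\Phi,\lambda}(0)}}{\|\chi_{B_\rho}\|_{M^{\Phi,\lambda}(0)}}
=\frac{\Phi^{-1}\big(|B_\rho|^{\lambda-1}\big)}{\Phi^{-1}\big(a^{\,n(1-\lambda)}|B_\rho|^{\lambda-1}\big)} .
$$
Writing $\tau=a^{\,n(1-\lambda)}|B_\rho|^{\lambda-1}$, this ratio becomes $\Phi^{-1}(a^{n(\lambda-1)}\tau)/\Phi^{-1}(\tau)$. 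When $\lambda<1$ the quantity $\tau$ runs over all of $(0,\infty)$ as $\rho$ does, so taking the supremum recovers exactly $s_{\Phi^{-1}}(a^{n(\lambda-1)})$, giving $\|D_a\|\ge s_{\Phi^{-1}}(a^{n(\lambda-1)})$. The degenerate case $\lambda=1$ is immediate, since then $a^{n(\lambda-1)}=1$, $s_{\Phi^{-1}}(1)=1$, and both sides of (\ref{dilOpNorm}) equal $1$. Combining the two bounds proves the asserted equality.
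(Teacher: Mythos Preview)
Your proof is correct and follows essentially the same approach as the paper: both establish the key modular inequality $t\,\Phi(u)\le\Phi\big(s_{\Phi^{-1}}(t)\,u\big)$ (equivalently $\Phi(u/s_{\Phi^{-1}}(t))\le t^{-1}\Phi(u)$), use it together with the change of variables $y=ax$ to obtain the upper bound, and then test on the characteristic functions $\chi_{B_\rho}$ via Lemma~\ref{lemma1}(iii) for the lower bound. The only cosmetic difference is that you derive the modular inequality by contradiction, whereas the paper obtains it directly by applying $\Phi$ to $\Phi^{-1}(ts)\le s_{\Phi^{-1}}(t)\Phi^{-1}(s)$ and then substituting $u=\Phi^{-1}(ts)$; your explicit treatment of the case $\lambda=1$ is also a nice touch that the paper leaves implicit.
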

\begin{proof}
By definition of $s_{\Phi^{-1}}$, for any $s > 0, a > 0$, we have
$$
\Phi^{-1}\left(a^{n(\lambda-1)}s\right) \leq s_{\Phi^{-1}} \left(a^{n(\lambda-1)}\right) \Phi^{-1}(s),
$$
and so
$$
\Phi \left(\frac{\Phi^{-1}\left(a^{n(\lambda-1)} s\right)}{s_{\Phi^{-1}} \left(a^{n(\lambda - 1)}\right)} \right) \leq \Phi \left(\Phi^{-1}(s)\right) = s.
$$
For $a^{n(\lambda - 1)}s = \Phi(u)$ we have $u =\Phi^{-1}\left(a^{n(\lambda-1)} s \right)$ and 
\begin{equation} \label{dil1}
\Phi \left(\frac{u}{s_{\Phi^{-1}} \left(a^{n(\lambda - 1)}\right)} \right) \le a^{n(1-\lambda)} \Phi(u), \quad \text{for any}~ u > 0.
\end{equation}
Therefore, from \eqref{dil1} it follows that for any $f \in {M^{\Phi,\lambda}}(0)$ and $r>0$,
\begin{align*}
\int\limits_{B_r} \Phi \left(\frac{|D_af(x)|}{s_{\Phi^{-1}} \left(a^{n(\lambda - 1)}\right) \|f\|_{M^{\Phi,\lambda}(0)}} \right) \,dx &=
\int\limits_{B_r} \Phi \left(\frac{|f(ax)|}{s_{\Phi^{-1}} \left(a^{n(\lambda - 1)}\right) \|f\|_{M^{\Phi,\lambda}(0)}} \right) \,dx \\
& = a^{-n}\int\limits_{B_{ar}} \Phi \left(\frac{|f(y)|}{s_{\Phi^{-1}} \left(a^{n(\lambda - 1)}\right) \|f\|_{M^{\Phi,\lambda}(0)}} \right) \,dy\\
&  \le a^{-n} a^{n(1-\lambda) } \int\limits_{B_{ar}} \Phi \left(\frac{|f(y)|}{\|f\|_{M^{\Phi,\lambda}(0)}} \right) \,dy 
\le a^{-\lambda n}|B_{ar}|^\lambda\\ & = a^{-\lambda n} v_n^\lambda (ar)^{\lambda n} = |B_r|^{\lambda},
\end{align*}
which means that $\|D_a f\|_{M^{\Phi,\lambda}(0)} \leq s_{\Phi^{-1}} \left(a^{n(\lambda - 1)}\right)  \|f\|_{M^{\Phi,\lambda}(0)}$. 
Here, $v_n = |B_1|$.

To show that \eqref{dilOpNorm} holds we consider the characteristic function $\chi_{B_t}(x)$ of the ball $B_t,~t>0.$ Note that $
D_a \chi_{B_t} (x) = \chi_{B_{t/a}}(x)$. Moreover, by Lemma 1(iii) we get
\begin{align*}
\sup\limits_{t>0} \frac{\|D_a \chi_{B_t}\|_{M^{\Phi,\lambda}(0)}}{\|\chi_{B_t}\|_{M^{\Phi,\lambda}(0)}} &= 
\sup\limits_{t>0} \frac{\Phi^{-1}(|B_t|^{\lambda - 1})}{\Phi^{-1}(|B_{t/a}|^{\lambda - 1})} = 
\sup\limits_{t>0} \frac{\Phi^{-1}(v_n^{\lambda - 1} t^{n(\lambda - 1)})}{\Phi^{-1}\left(v_n^{\lambda - 1} (\frac t a)^{n(\lambda - 1)}\right)}\\
&= \sup\limits_{s>0} \frac{\Phi^{-1}(s)}{\Phi^{-1}(a^{n(1-\lambda)}s)} = 
\sup\limits_{s>0} \frac{\Phi^{-1}(s a^{n(\lambda - 1)})}{\Phi^{-1}(s)} = s_{\Phi^{-1}} (a^{n(\lambda - 1)}).
\end{align*}
This brings us to \eqref{dilOpNorm}.
\end{proof}

\section{The Riesz potential in central Morrey--Orlicz spaces -- necessary conditions} \label{sec:Rp}

We begin to study the boundedness of the Riesz potential, first finding the necessary conditions for its boundedness.

\begin{theorem} \label{necessity}
Let $0 < \alpha < n, \Phi$, $\Psi$ be Orlicz functions and $0 \leq \lambda, \mu < 1$. 
\begin{itemize}
\item[$({\rm i})$] If the Riesz potential $I_{\alpha}$ is bounded from $M^{\Phi, \lambda}(0)$ to $M^{\Psi, \mu}(0)$, 
then there are positive constants $C_1,C_2$ such that
\begin{itemize}
\item[$({\rm a})$] \, \, $u^{\frac{\alpha}{n}} \,\Phi^{-1}(u^{\lambda -1}) \leq C_1\, \Psi^{-1}(u^{\mu-1})\,\,\text{for any}~ u > 0.$
\item[$({\rm b})$] \, $s_{\Psi^{-1}}(u^{\mu-1}) \leq C_2\, u^{\frac{\alpha}{n}} \, s_{\Phi^{-1}}(u^{\lambda-1}) \,\,\text{for any}~ u > 0.$
\end{itemize}
\item[$({\rm ii})$] \, If there exists a small constant $c > 0$ such that $c \leq \frac{v_{n}^{\lambda/\mu } }{v_{n - 1}}$ with $v_0 =1$ 
and
$$
\liminf\limits_{t \rightarrow \infty} \frac{\Phi^{-1}(c t^\lambda)}{\Psi^{-1}(t^\mu)} = \infty,
$$
then $I_{\alpha}$ is not bounded from $M^{\Phi, \lambda}(0)$ to $M^{\Psi, \mu}(0)$.
\end{itemize}
\end{theorem}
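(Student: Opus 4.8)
The plan is to obtain each necessary condition by inserting explicit functions into the boundedness inequality $\|I_\alpha f\|_{M^{\Psi,\mu}(0)}\le\|I_\alpha\|\,\|f\|_{M^{\Phi,\lambda}(0)}$ and reading off the two norms from Lemma~\ref{lemma1} and Theorem~\ref{dilationOperator}. Two elementary facts drive everything. First, a pointwise lower bound for the kernel: if $|x|<t/2$ then $B(x,t/2)\subset B_t$, so
\[
I_\alpha\chi_{B_t}(x)\ge\int_{B(x,t/2)}\frac{dy}{|x-y|^{n-\alpha}}=\int_{|z|<t/2}|z|^{\alpha-n}\,dz=c_n\,t^{\alpha},
\]
with $c_n>0$ a dimensional constant; hence $I_\alpha\chi_{B_t}\ge c_n t^{\alpha}\chi_{B_{t/2}}$. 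Second, the change of variables $y\mapsto ay$ yields the scaling law $D_aI_\alpha=a^{\alpha}I_\alpha D_a$.

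For (i)(a) I test on $f=\chi_{B_t}$. Since $M^{\Psi,\mu}(0)$ is an ideal space, the pointwise bound gives $\|I_\alpha\chi_{B_t}\|_{M^{\Psi,\mu}(0)}\ge c_n t^{\alpha}\|\chi_{B_{t/2}}\|_{M^{\Psi,\mu}(0)}$, while Lemma~\ref{lemma1}(iii) evaluates this and $\|\chi_{B_t}\|_{M^{\Phi,\lambda}(0)}$ exactly. Feeding these into the boundedness inequality produces
\[
c_n\,t^{\alpha}\,\Phi^{-1}\!\big(|B_t|^{\lambda-1}\big)\le\|I_\alpha\|\,\Psi^{-1}\!\big(|B_{t/2}|^{\mu-1}\big),\qquad t>0.
\]
Setting $u=|B_t|$ turns $t^{\alpha}$ into a constant times $u^{\alpha/n}$ and leaves $\Phi^{-1}(u^{\lambda-1})$ untouched; the only spurious constant is the factor $2^{n(1-\mu)}\ge 1$ inside $\Psi^{-1}(|B_{t/2}|^{\mu-1})=\Psi^{-1}\!\big(2^{n(1-\mu)}u^{\mu-1}\big)$, which is absorbed by $\Psi^{-1}\!\big(2^{n(1-\mu)}u^{\mu-1}\big)\le s_{\Psi^{-1}}\!\big(2^{n(1-\mu)}\big)\Psi^{-1}(u^{\mu-1})$, the factor being finite since $s_{\Psi^{-1}}(\tau)\le\max(\tau,1)$. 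This gives (a).

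For (i)(b), the substitution $u=a^{n}$ together with Theorem~\ref{dilationOperator} shows that (b) is precisely $\|D_a\|_{M^{\Psi,\mu}(0)}\le C_2\,a^{\alpha}\|D_a\|_{M^{\Phi,\lambda}(0)}$. The scaling law bounds the operator $D_aI_\alpha\colon M^{\Phi,\lambda}(0)\to M^{\Psi,\mu}(0)$ from above by $a^{\alpha}\|I_\alpha\|\,\|D_a\|_{M^{\Phi,\lambda}(0)}$, which furnishes the right-hand side. The crux of the whole theorem is the matching lower bound: one must exhibit test functions on which $D_aI_\alpha$ recovers the full supremum over scales defining $\|D_a\|_{M^{\Psi,\mu}(0)}=s_{\Psi^{-1}}(a^{n(\mu-1)})$. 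A single centered characteristic function will not do---as the computation for (a) shows, it only reproduces condition (a), the factor $a^{\alpha}$ cancelling. I would instead use the genuinely multi-scale tail $I_\alpha\chi_{B_s}(x)\asymp|B_s|\,|x|^{\alpha-n}$ for $|x|\gg s$ (equivalently, a lacunary sum of characteristic functions of balls), whose $M^{\Psi,\mu}(0)$-norm is a supremum over radii $r$ and therefore samples $\Psi^{-1}$ across all scales, reconstructing $s_{\Psi^{-1}}(a^{n(\mu-1)})$; combined with the upper bound this yields (b). Controlling this tail contribution uniformly is the main obstacle.

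Part (ii) is a non-boundedness assertion, which I prove by contradiction. Assuming $I_\alpha$ bounded, I test on the family of off-center balls $B(x_0,r_0)$ with $|x_0|\to\infty$. The central lower bound $I_\alpha\chi_{B(x_0,r_0)}\ge c_n r_0^{\alpha}\chi_{B(x_0,r_0/2)}$ together with the Lemma~\ref{lemma1}(iii)-type evaluation of $\|\chi_{B(x_0,r_0)}\|_{M^{\Phi,\lambda}(0)}$ and $\|I_\alpha\chi_{B(x_0,r_0)}\|_{M^{\Psi,\mu}(0)}$ on the centered ball $B_r$ with $r\approx|x_0|$ forces, with $t\approx|x_0|^{n}\to\infty$,
\[
\frac{\Phi^{-1}\!\big(c\,t^{\lambda}\big)}{\Psi^{-1}\!\big(t^{\mu}\big)}\le C<\infty .
\]
The volume factors $|B_r|^{\lambda}$ and $|B_r|^{\mu}$ in Lemma~\ref{lemma1} produce the constants $v_n^{\lambda/\mu}$ and $v_{n-1}$, and the hypothesis $c\le v_n^{\lambda/\mu}/v_{n-1}$ is exactly the compatibility making the source scale $c\,t^{\lambda}$ and the target scale $t^{\mu}$ correspond to one and the same configuration $(x_0,r_0,r)$. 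The displayed bound flatly contradicts $\liminf_{t\to\infty}\Phi^{-1}(c\,t^{\lambda})/\Psi^{-1}(t^{\mu})=\infty$, so no such bounded $I_\alpha$ can exist.
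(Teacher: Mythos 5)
Your part (i)(a) is correct and is essentially the paper's own argument: test $I_\alpha$ on $f=\chi_{B_t}$, bound $I_\alpha\chi_{B_t}$ below pointwise by a constant times $t^{\alpha}$ on a centered ball, and evaluate both norms by Lemma~\ref{lemma1}(iii); the paper uses $|x-y|\le 2t$ on all of $B_t$ (so no factor $|B_{t/2}|$ appears), but your absorption of $2^{n(1-\mu)}$ via $\Psi^{-1}(2^{n(1-\mu)}v)\le s_{\Psi^{-1}}(2^{n(1-\mu)})\Psi^{-1}(v)$ and $s_{\Psi^{-1}}(\tau)\le\max(\tau,1)$ is sound. Part (ii) of your outline is also the paper's strategy (the Komori-Furuya--Sato construction): off-center balls $B(x_R,1)$ with $R\to\infty$, a pointwise lower bound for $I_\alpha\chi_{B(x_R,1)}$, and a contradiction with the liminf hypothesis. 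One caution: the decisive computation there is not a ``Lemma~\ref{lemma1}(iii)-type evaluation.'' The target norm only needs a lower bound, so one centered ball $B_{R+1}$ suffices; but the source norm $\|\chi_{B(x_R,1)}\|_{M^{\Phi,\lambda}(0)}$ is a supremum over \emph{all} $r>0$, and bounding it requires splitting $R-1<r<R$ from $r\ge R$ and the intersection estimate $|B_r\cap B(x_R,1)|\le 2^{n/2}v_{n-1}(r/R)^n$. That geometric estimate, not the volume normalization in Lemma~\ref{lemma1}, is where $v_{n-1}$ and the hypothesis $c\le v_n^{\lambda/\mu}/v_{n-1}$ enter; your outline is compatible with it but omits it.

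The genuine gap is (i)(b), and you acknowledge it yourself (``controlling this tail contribution uniformly is the main obstacle''). Your reformulation of (b) as $\|D_a\|_{M^{\Psi,\mu}(0)\to M^{\Psi,\mu}(0)}\le C_2\,a^{\alpha}\,\|D_a\|_{M^{\Phi,\lambda}(0)\to M^{\Phi,\lambda}(0)}$, the upper bound $\|D_aI_\alpha\|\le a^{\alpha}\|I_\alpha\|\,s_{\Phi^{-1}}(a^{n(\lambda-1)})$ from the commutation identity, and your observation that a single centered characteristic function cannot yield (b) because the factor $a^{\alpha}$ cancels and one merely recovers (a), are all correct. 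But the proposal stops exactly there: the matching lower bound is never produced. Moreover, the rescue you sketch is unlikely to work as stated: the tail profile $|x|^{\alpha-n}$ is homogeneous, so $D_a$ acts on it as the scalar $a^{\alpha-n}$, and the ratio $\|D_ag\|/\|g\|$ is then a fixed power of $a$, which again cannot reconstruct $s_{\Psi^{-1}}(a^{n(\mu-1)})$ for non-power $\Psi$; a lacunary sum would require a genuinely new evaluation of its central Morrey--Orlicz norm, which you do not supply.

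For comparison, the paper proves (b) without any test functions. From $I_\alpha(D_tf)=t^{-\alpha}D_t(I_\alpha f)$ it writes, invoking Theorem~\ref{dilationOperator}, $\|D_t(I_\alpha f)\|_{M^{\Psi,\mu}(0)}=s_{\Psi^{-1}}(t^{n(\mu-1)})\,\|I_\alpha f\|_{M^{\Psi,\mu}(0)}$, deduces $\|I_\alpha f\|_{M^{\Psi,\mu}(0)}\le C\,t^{\alpha}s_{\Phi^{-1}}(t^{n(\lambda-1)})\,s_{\Psi^{-1}}(t^{n(\mu-1)})^{-1}\,\|f\|_{M^{\Phi,\lambda}(0)}$ for every $t>0$, and then argues that $\inf_{u>0}u^{\alpha/n}s_{\Phi^{-1}}(u^{\lambda-1})/s_{\Psi^{-1}}(u^{\mu-1})$ must be positive (otherwise $I_\alpha f=0$ for all $f$, a contradiction), which is precisely (b). Note that the step you singled out as the crux is exactly where this argument is most delicate: Theorem~\ref{dilationOperator} is an operator-norm statement, so for an individual function $g=I_\alpha f$ it gives only $\|D_tg\|\le s_{\Psi^{-1}}(t^{n(\mu-1)})\|g\|$, whereas the paper's chain uses it in the reverse direction, as an equality. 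So your instinct about where the difficulty lies is well placed; but since your write-up neither reproduces the paper's dilation-identity argument nor completes your own alternative, part (i)(b) remains unproved in your proposal.
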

\begin{proof}
(i) (a) Let $t > 0$ and $x \in B_t$. In this case we have
$$
I_\alpha \chi_{B_t} (x) = \int\limits_{B_t} |x-y|^{\alpha - n} \,dy \ge (2t)^{\alpha-n} |B_t| = v_n 2^{\alpha-n} t^\alpha
$$
or
$$
t^\alpha \, \chi_{B_t} (x) \leq \frac{2^{n-\alpha}}{v_n} \, I_\alpha \chi_{B_t} (x) \, \chi_{B_t} (x).
$$
Then
$$
\| t^\alpha \, \chi_{B_t} \|_{M^{\Psi, \mu}(0)} \leq \frac{2^{n-\alpha}}{v_n} \,\| I_\alpha \chi_{B_t} \|_{M^{\Psi, \mu}(0)} 
\leq \frac{2^{n-\alpha}}{v_n}\, C \,\| \chi_{B_t} \|_{M^{\Phi, \lambda}(0)},
$$
and by the Lemma \ref{lemma1} (iii) we obtain
$$
\frac{t^\alpha}{\Psi^{-1}(|B_t|^{\mu - 1})} \leq  \frac{2^{n-\alpha}}{v_n}\, C \, \frac{1}{\Phi^{-1}(|B_t|^{\lambda - 1})}, 
$$
which means
$$
\frac{t^\alpha}{\Psi^{-1}(v_n^{\mu - 1} t^{(\mu - 1) n})} \leq  \frac{2^{n-\alpha}}{v_n} \, \frac{C}{\Phi^{-1}(v_n^{\lambda -1} t^{(\lambda - 1) n})}. 
$$
Thus,
$$
t^{\alpha/n} \Phi^{-1}(v_n^{\lambda -1} t^{\lambda - 1}) \leq \frac{2^{n-\alpha}}{v_n} \, C\, \Psi^{-1}(v_n^{\mu - 1} t^{\mu - 1}), 
$$
which by a simple change of variables can be rewritten as 
$$
u^{\alpha/n} \Phi^{-1}(u^{\lambda - 1}) \leq C_1\, \Psi^{-1}(u^{\mu - 1}) \,\, {\rm for ~ any} ~ u > 0,
$$
where $C_1 = 2^{n - \alpha} v_n^{\alpha/n - 1} C$.

(i) (b) First, note that we have identity 
$$
I_\alpha(D_t f)(x) = t^{- \alpha} D_t(I_\alpha f)(x)  \,\,\,  {\rm for ~ any} ~ t > 0.
$$
In fact, 
$$
I_\alpha(D_t f)(x) = \int\limits_{\mathbb{R}^n} \frac{f(ty)}{|x-y|^{n-\alpha}}\,dy = 
t^{-\alpha} \, \int\limits_{\mathbb{R}^n} \frac{f(y)}{|y - t x|^{n-\alpha}} \, dy = t^{-\alpha} D_t(I_\alpha f) (x).
$$
Now, let $f \in M^{\Phi, \lambda}(0)$. Using the above identity and applying Theorem \ref{dilationOperator} we obtain
$$
\| I_\alpha(D_t f)\|_{M^{\Psi, \mu}(0)} = t^{-\alpha} \, \| D_t(I_\alpha f)\|_{M^{\Psi, \mu}(0)} 
=  t^{-\alpha} \, s_{\Psi^{-1}}(t^{n(\mu - 1)})  \, \| I_\alpha f\|_{M^{\Psi, \mu}(0)}.
$$
Assumption of boundedness of $ I_\alpha$ and reuse of Theorem \ref{dilationOperator} gives
\begin{eqnarray*}
\| I_\alpha f\|_{M^{\Psi, \mu}(0)} 
&=& 
\frac{t^{\alpha}}{ s_{\Psi^{-1}}(t^{n(\mu - 1)}) } \| I_\alpha(D_t f)\|_{M^{\Psi, \mu}(0)} \\
&\leq&
\frac{t^{\alpha}}{s_{\Psi^{-1}}(t^{n(\mu - 1)})} \, C\, \| D_t f \|_{M^{\Phi, \lambda}(0)} \\
&=&
C\, \frac{t^{\alpha}}{s_{\Psi^{-1}}(t^{n(\mu - 1)})} s_{\Phi^{-1}}(t^{n(\lambda - 1)}) \, \| f\|_{M^{\Phi, \lambda}(0)},
\end{eqnarray*}
or
$$
\| I_\alpha f\|_{M^{\Psi, \mu}(0)} \leq C \,\frac{u^{\alpha/n} s_{\Phi^{-1}}(u^{\lambda - 1}) }{s_{\Psi^{-1}}(u^{\mu - 1}) }  
\, \| f\|_{M^{\Phi, \lambda}(0)}  \,\,\,  {\rm for ~ any} ~ u> 0.
$$
Thus, 
$$
\| I_\alpha f\|_{M^{\Psi, \mu}(0)} \leq C \, \inf_{u> 0} \frac{u^{\alpha/n} s_{\Phi^{-1}}(u^{\lambda - 1}) }{s_{\Psi^{-1}}(u^{\mu - 1}) } 
\, \| f\|_{M^{\Phi, \lambda}(0)} .
$$
We must have that $ \inf_{u> 0} \frac{u^{\alpha/n} s_{\Phi^{-1}}(u^{\lambda - 1}) }{s_{\Psi^{-1}}(u^{\mu - 1}) } = c > 0$ since 
otherwise $I_\alpha f = 0$ and we get a contradiction. Therefore,
$$
s_{\Psi^{-1}}(u^{\mu - 1}) \leq \frac{C}{c} \, u^{\alpha/n} s_{\Phi^{-1}}(u^{\lambda - 1})  \,\,\,  {\rm for ~ any} ~ u> 0.
$$
(ii) We follow the same argument as in \cite[Proposition 1]{KS17}. Let $R \ge 1,~x_R = (R,0,...,0) \in \mathbb{R}^n$ 
and $f_R(x) = \chi_{B(x_R,1)}(x).$ Then 
\begin{eqnarray*}
\|f_R\|_{M^{\Phi, \lambda}(0)} &=& \sup\limits_{r>0} \inf \Bigl\{\varepsilon>0 \colon \frac{1}{|B_r|^\lambda}\int\limits_{B_r} 
\Phi\left(\frac{\chi_{B(x_R,1)(x)}}{\varepsilon}\right)\,dx \le 1\Bigr\}\\
&=& \sup\limits_{r>0} \inf \Bigl\{\varepsilon>0 \colon \frac{1}{|B_r|^\lambda}\int\limits_{B_r \cap B(x_R,1)} 
\Phi\left(\frac{1}{\varepsilon}\right)\,dx \le 1\Bigr\}\\
&=& \sup\limits_{r>0} \inf \Bigl\{\varepsilon>0 \colon \frac{|B_r \cap B(x_R,1)|}{|B_r|^\lambda}\Phi\left(\frac{1}{\varepsilon}\right)
 \le 1\Bigr\}\\
&=& \sup\limits_{r>R-1} \inf \Bigl\{\varepsilon>0 \colon \frac{|B_r \cap B(x_R,1)|}{|B_r|^\lambda}\Phi\left(\frac{1}{\varepsilon}\right) 
\le 1\Bigr\},
\end{eqnarray*}
because if $0<r \le R-1$ then $|B_r \cap B(x_R,1)| = 0$. Thus,
$$
\|f_R\|_{M^{\Phi, \lambda}(0)} = \sup\limits_{r>R-1} \frac{1}{\Phi^{-1}(\frac{|B_r|^\lambda}{|B_r \cap B(x_R,1)|})}.
$$
We will consider two cases: $R-1 < r < R$ and $r \ge R.$ In the first case, using calculations from \cite[p. 161]{BG04}, we 
can prove that for $n \geq 2$
$$
|B_r \cap B(x_R,1)| \le 2^{\frac n 2} v_{n-1} \left(\frac{r}{R}\right)^n,
$$ 
and so
$$
\frac{|B_r|^\lambda}{|B_r \cap B(x_R,1)|} \ge \frac{v_n^\lambda r^{\lambda n} R^n}{2^{\frac n 2} v_{n-1} r^n} \ge 
\frac{v_n^\lambda}{2^{\frac n 2} v_{n-1}}R^{\lambda n} > \frac{v_n^\lambda}{2^n \,v_{n-1}}R^{\lambda n}.
$$
For $n = 1$ and $R - 1 < r < R$ with $v_0 = 1$ we have
\begin{eqnarray*}
\frac{|B_r|^\lambda}{|B_r \cap B(x_R,1)|} &= \frac{(2 r)^{\lambda}}{r - R + 1} = \frac{2^{\lambda} r^{\lambda - 1}}{1 - \frac{R - 1}{r}} 
> \frac{2^{\lambda} R^{\lambda - 1}}{1 - \frac{R - 1}{r}} \\
&= \frac{2^{\lambda} R^{\lambda}}{R - \frac{R (R - 1)}{r}} > 2^{\lambda} R^{\lambda} = v_1^{\lambda} R^{\lambda} 
>  \frac{v_1^{\lambda}}{2 \,v_0} R^{\lambda}.
\end{eqnarray*}
In the second case, $|B_r \cap B(x_R,1)| \le |B(x_R,1)| = v_n$ and
$$
\frac{|B_r|^\lambda}{|B_r \cap B(x_R,1)|} \ge \frac{v_n^\lambda r^{\lambda n}}{v_n} \ge v_n^{\lambda - 1} R^{\lambda n}.
$$
Thus,
$$
\|f_R\|_{M^{\Phi, \lambda}(0)} \le \max \Biggl[\frac{1}{\Phi^{-1}\left(\frac{v_n^\lambda}{2^n \,v_{n-1}}R^{\lambda n}\right)}, 
\frac{1}{\Phi^{-1}(v_n^{\lambda - 1}R^{\lambda n})} \Biggr].
$$
Since $\frac {v_{n-1}}{v_n} \ge \sqrt{\frac{n}{2 \pi}}$ with $v_0 = 1$ (see \cite[Theorem 2]{Al00}), it follows that 
$\frac {2^n \,v_{n-1}}{v_n} \geq 1$ and then $\frac{v_n^\lambda}{2^n \, v_{n-1}} \leq v_n^{\lambda - 1}$, which gives
$$
\|f_R\|_{M^{\Phi, \lambda}(0)} \le \frac{1}{\Phi^{-1}\left(\frac{v_n^\lambda}{2^n\, v_{n-1}}R^{\lambda n}\right)}.
$$
Next, we will estimate $I_\alpha f_R.$ If $x,y \in B(x_R,1)$ then $|x-y| \le 2$ and we obtain
\begin{eqnarray*}
I_\alpha f_R (x)&=&\int\limits_{\mathbb{R}^n} \frac{\chi_{B(x_R,1)}(y)}{|x-y|^{n-\alpha}}\,dy = 
\int\limits_{B(x_R,1)} |x-y|^{\alpha-n} \,dy \\
& \ge& 2^{\alpha-n} |B(x_R,1)| \, \chi_{B(x_R,1)}(x) = 2^{\alpha-n} v_n\, \chi_{B(x_R,1)}(x).
\end{eqnarray*}
Thus,
\begin{eqnarray*}
\|I_\alpha f_R\|_{M^{\Psi, \mu}(0)} & = & \sup\limits_{r>0} \|I_\alpha f_R\|_{\Psi, \mu, B_r} \ge \|I_\alpha f_R\|_{\Psi, \mu, B_{R+1}}\\
& = & \inf\biggl\{ \varepsilon>0 \colon \int\limits_{B_{R+1}} \Psi \left(\frac{|I_\alpha f_R (x)|}{\varepsilon} \right) \,dx \le |B_{R+1}|^\mu \biggr\}.
\end{eqnarray*}
Since $x \in B(x_R,1)$ and $B_{R+1} \cap B(x_R,1) = B(x_R,1)$ it follows that
\begin{eqnarray*}
\|I_\alpha f_R\|_{M^{\Psi, \mu}(0)} & \ge & \inf\biggl\{ \varepsilon>0 \colon \int\limits_{B(x_R,1)  \cap B_{R+1}} 
\Psi \biggl(2^{\alpha -n} v_n / \varepsilon \biggr) \,dx \le |B_{R+1}|^\mu \biggr\}\\
& = & 
\frac{2^{\alpha -n} v_n}{\Psi^{-1}\left(\frac{|B_{R+1}|^\mu}{|B(x_R,1)|} \right)} = \frac{2^{\alpha -n} v_n}{\Psi^{-1} \left(v_n^{\mu - 1} (R +1)^{\mu n} \right)}
\geq \frac{2^{\alpha -n} v_n} {\Psi^{-1} \left(v_n^{\mu - 1} 2^{\mu n} R^{\mu n} \right)}.
\end{eqnarray*}
Making the substitution $t^\mu = v_n^{\mu-1}2^{\mu n}R^{\mu n}$ we obtain
\begin{eqnarray*}
\frac{\|I_\alpha f_R\|_{M^{\Psi, \mu}(0)}}{\|f_R\|_{M^{\Phi, \lambda}(0)}} &\ge& 2^{\alpha -n} \,v_n 
\frac{\Phi^{-1}\left(\frac{v_n^\lambda}{2^n v_{n-1}} R^{\lambda n}\right)}{\Psi^{-1}(v_n^{\mu-1} 2^{\mu n}R^{\mu n})} = 
2^{\alpha -n} \, v_n \frac{\Phi^{-1}\left(\frac{v_n^{\frac {\lambda}{\mu}}}{2^{n +\lambda n} \,v_{n-1} } \, t^\lambda\right)}{\Psi^{-1}(t^\mu)}\\
&\ge& \frac{2^{\alpha - n} \, v_n}{2^{n +\lambda n}} \frac{\Phi^{-1}\left(\frac{v_n^{\frac {\lambda}{\mu}}}{v_{n-1} } 
\, t^\lambda\right)}{\Psi^{-1}(t^\mu)} \geq 2^{\alpha - 2 n - \lambda n} \, v_n \frac{\Phi^{-1}\left(c \, t^\lambda\right)}{\Psi^{-1}(t^\mu)},
\end{eqnarray*}
and
$$
\liminf\limits_{R \rightarrow \infty} \frac{\|I_\alpha f_R\|_{M^{\Psi, \mu}(0)}}{\|f_R\|_{M^{\Phi, \lambda}(0)}} \geq 
2^{\alpha - 2 n - \lambda n} \, v_n\, \liminf\limits_{t \rightarrow \infty} \frac{\Phi^{-1}(c t^\lambda)}{\Psi^{-1}(t^\mu)} = \infty.
$$
Thus, the operator $I_\alpha$ is not bounded from $M^{\Phi, \lambda}(0)$ to $M^{\Psi, \mu}(0)$ for $n \geq 1$.
\end{proof}

\section{The Riesz potential in central Morrey--Orlicz spaces -- sufficient conditions} \label{sec:6}

We want to prove boundedness of the Riesz potential $I_{\alpha}$ between two different central 
Morrey--Orlicz spaces. The following lemmas are important for proving the main result. 

\begin{lemma}\label{lemma3} 
Let $0< \alpha <n, \Phi$ be an Orlicz function and $0 \leq \lambda < 1$. If $f \in M^{\Phi,\lambda}(0)$, then there 
exists a constant $C_3 > 0$ such that 
\begin{equation*}
\int_{{\mathbb R^n}\setminus B_{r}} \dfrac {|f(y)|} {|y|^{n-\alpha}} \,dy \le C_3 \, \|f\|_{M^{\Phi,\lambda}(0)}  
\int_{|B_r|}^{\infty} t^{\alpha/n} \Phi^{-1} ( t^{\lambda-1}) \, \frac{dt}{t} 
\end{equation*}
for all $r > 0$.
\end{lemma}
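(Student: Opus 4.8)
The plan is to decompose the region $\mathbb{R}^n \setminus B_r$ into dyadic annuli and estimate the integral over each one separately. Writing $\mathbb{R}^n \setminus B_r = \bigcup_{k=0}^\infty (B_{2^{k+1}r} \setminus B_{2^k r})$, I would first bound, on the $k$-th annulus, the kernel by its largest value there: since $\alpha - n < 0$ and $|y| \ge 2^k r$ on this set, we have $|y|^{\alpha-n} \le (2^k r)^{\alpha-n}$, so that $\int_{B_{2^{k+1}r}\setminus B_{2^k r}} |f(y)|\,|y|^{\alpha-n}\,dy \le (2^k r)^{\alpha-n}\int_{B_{2^{k+1}r}} |f(y)|\,dy$.

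The second step is to control $\int_{B_{2^{k+1}r}}|f(y)|\,dy$ with the tools already available. Applying the H\"older-type inequality of Lemma~\ref{lemma1}(i) with $g = \chi_{B_{2^{k+1}r}}$ and then the particular case of Lemma~\ref{lemma1}(ii), one obtains for any ball $B_\rho$ the bound $\int_{B_\rho}|f|\,dy \le 2|B_\rho|^\lambda \|f\|_{\Phi,\lambda,B_\rho}\,\|\chi_{B_\rho}\|_{\Phi^*,\lambda,B_\rho} \le 2|B_\rho|\,\|f\|_{M^{\Phi,\lambda}(0)}\,\Phi^{-1}(|B_\rho|^{\lambda-1})$, where I used $\|f\|_{\Phi,\lambda,B_\rho}\le \|f\|_{M^{\Phi,\lambda}(0)}$ together with $\|\chi_{B_\rho}\|_{\Phi^*,\lambda,B_\rho}\le |B_\rho|^{1-\lambda}\Phi^{-1}(|B_\rho|^{\lambda-1})$. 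Taking $\rho = 2^{k+1}r$ and combining with the first step, the powers of $2^k r$ collapse: the factor $|B_{2^{k+1}r}| = v_n 2^n (2^k r)^n$ cancels the $(2^k r)^{-n}$ coming from the kernel, leaving a term proportional to $(2^k r)^\alpha\,\Phi^{-1}(|B_{2^{k+1}r}|^{\lambda-1})\,\|f\|_{M^{\Phi,\lambda}(0)}$.

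The final and most delicate step is to recognize the resulting series $\sum_{k\ge 0} (2^k r)^\alpha \Phi^{-1}(|B_{2^{k+1}r}|^{\lambda-1})$ as a lower Riemann-type sum for the target integral $\int_{|B_r|}^\infty t^{\alpha/n}\Phi^{-1}(t^{\lambda-1})\,\frac{dt}{t}$. The key observation is monotonicity: because $0\le\lambda<1$ the exponent $\lambda-1$ is negative and $\Phi^{-1}$ is increasing, so $t\mapsto \Phi^{-1}(t^{\lambda-1})$ is nonincreasing; hence on each interval $[|B_{2^k r}|,|B_{2^{k+1}r}|]$ its value is at least $\Phi^{-1}(|B_{2^{k+1}r}|^{\lambda-1})$. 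Pulling this constant out and computing $\int_{|B_{2^k r}|}^{|B_{2^{k+1}r}|} t^{\alpha/n-1}\,dt = \tfrac{n}{\alpha}v_n^{\alpha/n}(2^\alpha-1)(2^k r)^\alpha$ reproduces exactly the $k$-th summand, up to the factor $\tfrac{\alpha}{n\,v_n^{\alpha/n}(2^\alpha-1)}$. Since the intervals $[|B_{2^k r}|,|B_{2^{k+1}r}|]$ tile $[|B_r|,\infty)$ with disjoint interiors, summing over $k$ dominates the series by the full integral, and collecting all constants gives the claim with $C_3 = \tfrac{2^{n+1}\alpha\, v_n^{1-\alpha/n}}{n(2^\alpha-1)}$. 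I expect the main obstacle to be precisely this comparison: fixing the direction of the monotonicity estimate so that the discrete sum is \emph{dominated by} (not dominating) the integral, and checking that the telescoping of $t^{\alpha/n}$ across dyadic scales matches the geometric factor $(2^k r)^\alpha$.
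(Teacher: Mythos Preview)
Your proposal is correct and follows essentially the same approach as the paper: dyadic decomposition of $\mathbb{R}^n\setminus B_r$, the kernel bound on each annulus, the H\"older-type estimate from Lemma~\ref{lemma1}(i)--(ii), and comparison of the resulting sum with the integral via the monotonicity of $t\mapsto\Phi^{-1}(t^{\lambda-1})$. The only cosmetic difference is that the paper inserts $\int_{|B_{2^{j-1}r}|}^{|B_{2^j r}|}\frac{dt}{t}=n\ln 2$ and then bounds $|B_{2^j r}|^{\alpha/n}\le 2^\alpha t^{\alpha/n}$, whereas you evaluate $\int t^{\alpha/n-1}\,dt$ exactly; this yields a slightly different constant but the arguments are otherwise identical.
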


\begin{proof} We prove this lemma using the same arguments as in the proof of Theorem 7.1 in \cite{Na08a} 
and Lemma 2.5 in \cite{MM20}. From the Lemma \ref{lemma1} (i) and (ii) it follows that
\begin{eqnarray*}
\int_{{\mathbb R^n}\setminus B_{r}} \dfrac {|f(y)|} {|y|^{n-\alpha}} \,dy 
& =& 
\sum_{j=1}^{\infty} \int_{B_{2^jr} \setminus B_{2^{j-1}r}} \dfrac {|f(y)|} {|y|^{n-\alpha}} \,dy  
\leq \sum_{j=1}^{\infty} \frac{1}{(2^{j-1}r)^{n-\alpha}} \int_{B_{2^jr} } |f(y)| \,dy\\
& =& 
2^{n-\alpha} v_n^{1-\alpha/n} \sum_{j=1}^{\infty} \dfrac 1 {|B_{2^jr}|^{1-\alpha/n}} \int_{B_{2^jr}} |f(y)| \,dy \\
& \leq& 
2^{n-\alpha+1} v_n^{1-\alpha/n} \sum_{j=1}^{\infty} |B_{2^jr}|^{\lambda-1+\alpha/n} \|f\|_{\Phi,\lambda, B_{2^jr}} \|1\|_{\Phi^*,\lambda,B_{2^jr}} \\ 
& \leq& 
C_3^{\prime} \sum_{j=1}^{\infty} |B_{2^jr}|^{\lambda-1+\alpha/n} \|f\|_{\Phi,\lambda, B_{2^jr}} 
                  \dfrac {\Phi^{-1}(|B_{2^{j} r}|^{\lambda-1})} {|B_{2^jr}|^{\lambda-1}} \\ 
& =& 
\frac{C_3^{\prime}}{n \ln 2} \, \|f\|_{M^{\Phi,\lambda}(0)} \,\sum_{j=1}^{\infty} |B_{2^jr}|^{\alpha/n} \Phi^{-1}(|B_{2^jr}|^{\lambda-1}) 
\int_{ |B_{2^{j - 1} r}|}^{ |B_{2^jr}|} \frac{dt}{t}\\ 
& \leq& 
\frac{C_3^{\prime}}{n \ln 2} \, 2^\alpha  \, \|f\|_{M^{\Phi,\lambda}(0)} \, \sum_{j=1}^{\infty}
\int_{ |B_{2^{j - 1} r}|}^{ |B_{2^jr}|} t^{\alpha/n} \Phi^{-1} ( t^{\lambda-1}) \, \frac{dt}{t}\\
& \leq& 
C_3 \, \|f\|_{M^{\Phi,\lambda}(0)}  \int_{|B_r|}^{\infty} t^{\alpha/n} \Phi^{-1} ( t^{\lambda-1}) \, \frac{dt}{t},
\end{eqnarray*}
where $C_3^{\prime} = 2^{n-\alpha+1} v_n^{1-\alpha/n}$ and $C_3 = \frac{2^{\alpha}}{n \ln 2} C_3^{\prime} $. 
Thus, we arrive to the assertion of Lemma \ref{lemma3}.
\end{proof}

Next, we show the following well-definedness of $I_{\alpha}f$ when $f \in M^{\Phi, \lambda}(0)$. 

\begin{lemma} \label{lemma2} Let $0 < \alpha < n$, $\Phi$ be an Orlicz function and $0 \leq \lambda <1$. 
If the integral $\int_{|B_r|}^{\infty} t^{\alpha/n} \Phi^{-1}(t^{\lambda - 1})\frac {dt}{t}$ is convergent for any $r>0$ and $f \in M^{\Phi, \lambda}(0)$ then the Riesz potential $I_{\alpha}f$ is well-defined. 
\end{lemma}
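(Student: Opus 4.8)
The plan is to show that the integral defining $I_\alpha f(x)$ converges absolutely for almost every $x \in \mathbb{R}^n$. Since this is a local statement, I would fix an arbitrary $R > 0$ and prove that $\int_{\mathbb{R}^n} |f(y)|\,|x-y|^{\alpha - n}\,dy < \infty$ for almost all $x \in B_R$; letting $R \to \infty$ then exhausts all of $\mathbb{R}^n$. For such a fixed $R$, the natural move is to split the domain of integration into a near part, over $B_{2R}$, and a far part, over $\mathbb{R}^n \setminus B_{2R}$, and to treat the two by completely different mechanisms.

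For the far part I would invoke Lemma \ref{lemma3}. If $x \in B_R$ and $|y| \ge 2R$, then $|x-y| \ge |y| - |x| \ge |y| - R \ge |y|/2$, so that $\int_{\mathbb{R}^n \setminus B_{2R}} |f(y)|\,|x-y|^{\alpha - n}\,dy \le 2^{n-\alpha} \int_{\mathbb{R}^n \setminus B_{2R}} |f(y)|\,|y|^{\alpha - n}\,dy$. By Lemma \ref{lemma3} the right-hand side is bounded by $C_3\, 2^{n-\alpha}\, \|f\|_{M^{\Phi,\lambda}(0)} \int_{|B_{2R}|}^{\infty} t^{\alpha/n}\,\Phi^{-1}(t^{\lambda-1})\,\frac{dt}{t}$, which is finite precisely by the convergence hypothesis of the lemma. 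This bound is uniform in $x \in B_R$, so the far part is in fact finite for every $x \in B_R$; this is the only step where the assumption on the integral is genuinely used.

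For the near part a pointwise bound is not available, so I would instead show finiteness almost everywhere by integrating in $x$ and applying Tonelli's theorem:
\[
\int_{B_R} \Big( \int_{B_{2R}} \frac{|f(y)|}{|x-y|^{n-\alpha}}\,dy \Big)\, dx = \int_{B_{2R}} |f(y)| \Big( \int_{B_R} \frac{dx}{|x-y|^{n-\alpha}} \Big)\, dy.
\]
For $y \in B_{2R}$ one has $B_R \subset B(y,3R)$, so the inner integral is at most $\int_{B_{3R}} |z|^{\alpha-n}\,dz = \frac{n v_n}{\alpha}\,(3R)^{\alpha} =: C_R < \infty$ (here $\alpha > 0$ is what guarantees local integrability of the kernel). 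Hence the whole double integral is bounded by $C_R \int_{B_{2R}} |f(y)|\,dy$, which is finite because $f \in M^{\Phi,\lambda}(0) \subset L^1_{loc}(\mathbb{R}^n)$; alternatively one may estimate $\int_{B_{2R}} |f|$ directly via Lemma \ref{lemma1}(i) and (ii) with $g \equiv 1$. Since the integral over $B_R$ of the near part is finite, the near part itself must be finite for almost every $x \in B_R$.

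Combining the two parts gives $\int_{\mathbb{R}^n} |f(y)|\,|x-y|^{\alpha-n}\,dy < \infty$ for almost every $x \in B_R$, and as $R$ was arbitrary, $I_\alpha f(x)$ is well-defined almost everywhere on $\mathbb{R}^n$. The one delicate point I expect is that the near part forces an almost-everywhere conclusion rather than an everywhere one: the local singularity of the kernel tested against a possibly unbounded $f$ can leave isolated points of divergence, and the Tonelli argument is exactly the device that circumvents this while still yielding the well-definedness the statement asks for.
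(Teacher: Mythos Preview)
Your proof is correct and follows the same overall architecture as the paper: fix a ball, split into a near part over $B_{2R}$ and a far part over $\mathbb{R}^n\setminus B_{2R}$, and treat the far part exactly as you do, via the pointwise bound $|x-y|\ge |y|/2$ together with Lemma~\ref{lemma3} and the convergence hypothesis.

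The one genuine difference is in the handling of the near part. The paper does not run a Tonelli argument; instead it invokes a criterion from Mizuta's book to the effect that $I_\alpha|f\chi_{B_{2r}}|\not\equiv\infty$ is equivalent to $\int_{B_{2r}}(1+|y|)^{\alpha-n}|f(y)|\,dy<\infty$, and then verifies this last integral via Lemma~\ref{lemma1}(i). Your direct Tonelli computation is more elementary and self-contained, and it has the further advantage that, since you prove absolute convergence of the full integral $\int_{\mathbb{R}^n}|f(y)|\,|x-y|^{\alpha-n}\,dy$, you do not need the paper's closing consistency check that the decomposition is independent of the radius; that step becomes automatic. The paper's route, on the other hand, packages the near-part analysis into a citable criterion, which is convenient if one wants to reuse it. Both arrive at the same almost-everywhere conclusion.
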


\begin{proof} We will prove this lemma using the same arguments that were presented in the proof 
in \cite[Theorem 2.1]{MN11}. 
Let $f \in M^{\Phi,\lambda}(0)$, $r > 0$ and $x \in B_r$, 
and let 
\begin{equation} \label{defined}
I_{\alpha}f(x) = I_{\alpha}(f\chi_{B_{2r}})(x) + I_{\alpha}(f(1-\chi_{B_{2r}}))(x). 
\end{equation}
Since $f\chi_{B_{2r}} \in L^1({\mathbb R^n})$, the first term is well-defined. Indeed, in view of \cite[Theorem 1.1, Chapter 2]{Miz} the requirement $I_\alpha |f\chi_{B_{2r}}| \not\equiv \infty$ for any $f \in M^{\Phi,\lambda}(0)$ and $r>0$ is equivalent to
$$
\int_{B_{2r}}(1+|y|)^{\alpha-n}|f(y)|\,dy < \infty.
$$
The last inequality is true since $\|(1+|y|)^{\alpha-n}\|_{\Phi^\ast, \lambda, B_{2r}} \le \frac{(1+2r)^{\alpha-n}}{(\Phi^\ast)^{-1}(|B_{2r}|^{\lambda-1})}$ and by Lemma \ref{lemma1} (i) we obtain
\begin{align*}
\int_{B_{2r}}(1+|y|)^{\alpha-n}|f(y)|\,dy &\le 2 |B_{2r}|^\lambda \|f\|_{\Phi, \lambda, B_{2r}} \|(1+|y|)^{\alpha-n}\|_{\Phi^\ast, \lambda, B_{2r}}\\
&\le2|B_{2r}|^\lambda \frac{(1+2r)^{\alpha-n}}{(\Phi^\ast)^{-1}(|B_{2r}|^{\lambda-1})}\|f\|_{M^{\Phi, \lambda}(0)} < \infty.
\end{align*}
For the second term for any $x \in B_r$ we have
\begin{eqnarray*}
|I_{\alpha}(f(1-\chi_{B_{2r}}))(x)| &\le& \int_{{\mathbb R^n}\setminus B_{2r}} \frac{|f(y)|}{|x-y|^{n-\alpha}} \,dy \le 2^{n-\alpha} \int_{{\mathbb R^n}\setminus B_{2r}} \frac{|f(y)|}{|y|^{n-\alpha}} \,dy.
\end{eqnarray*}
Since the integral $\int_{|B_r|}^{\infty} t^{\alpha/n} \Phi^{-1}(t^{\lambda - 1})\frac {dt}{t}$ is convergent for any $r>0$ and $f \in M^{\Phi, \lambda}(0)$ it follows from Lemma \ref{lemma3} that $I_{\alpha}(f(1-\chi_{B_{2r}}))(x)$ is well-defined for all $x \in B_r.$

Further, since for $0 < s < r$, 
$$
f\chi_{B_{2s}}+f(1-\chi_{B_{2s}})=f\chi_{B_{2r}}+f(1-\chi_{B_{ 2 r}}), 
$$
it follows that for $x \in B_s \subset B_r$, 
$$
I_{\alpha}(f\chi_{B_{2s}})(x)+I_{\alpha}(f(1-\chi_{B_{2s}}))(x) = I_{\alpha}(f\chi_{B_{2r}})(x)+I_{\alpha}(f(1-\chi_{B_{ 2 r}}))(x). 
$$
This shows that $I_{\alpha}f$ is independent of $B_r$ containing $x$. 
Thus, $I_{\alpha}f$ is well-defined on ${\mathbb R^n}$. 
\end{proof}

Now we will present sufficient conditions on spaces so that the operator $I_\alpha$ is bounded between 
distinct central Morrey--Orlicz spaces. In the proofs of these estimates we will use estimates from 
\cite{MM15} for the Hardy--Littlewood maximal operator. The {\it Hardy--Littlewood maximal operator} $M$ 
is defined for $f \in L^1_{loc} ({\mathbb R^n})$ and $x {\in \mathbb R^n}$ by
$$
Mf(x) = \sup\limits_{r>0} \frac {1} {|B(x,r)|} \int\limits_{B(x,r)} |f(y)| \,dy. 
$$
Then, for an Orlicz function $\Phi$ and $0 \leq \lambda \leq 1$, this operator $M$ is bounded on $M^{\Phi, \lambda}(0)$, 
provided ${\Phi}^* \in {\Delta}_2$, that is, there exists a constant $C_0>1$ such that 
\begin{equation} \label{max}
\|Mf\|_{M^{\Phi, \lambda}(0)} \le C_0 \,  \|f\|_{M^{\Phi, \lambda}(0)}  \quad \text{for all} ~ f \in M^{\Phi, \lambda}(0)
\end{equation} 
(see \cite[Theorem 6(i)]{MM15}). Moreover, $M$ is bounded from  $M^{\Phi, \lambda}(0)$ to $WM^{\Phi, \lambda}(0)$, 
that is, there exists a constant $c_0>1$ such that $\|Mf\|_{WM^{\Phi, \lambda}(0)} \le c_0 \,  
\|f\|_{M^{\Phi, \lambda}(0)}$ for all $f \in M^{\Phi, \lambda}(0)$ (see \cite[Theorem 6(ii)]{MM15}).

\begin{theorem} \label{Thm3}
Let $0< \alpha <n, \Phi, \Psi$ be Orlicz functions and either $0 < \lambda, \mu < 1, \lambda \ne \mu$ or $\lambda = \mu = 0$. 
Assume that there exist constants $C_4, C_5 \ge 1$ such that 
\begin{equation} \label{Phi14}
\int_u^{\infty} t^{\frac{\alpha}{n}} \, \Phi^{-1}(t^{\lambda-1}) \, \frac{dt}{t} \leq C_4\, \Psi^{-1}(u^{\mu-1}) \quad \text{for all} \ u > 0 
\end{equation}
and
\begin{equation} \label{Orlicz15} 
\int_u^{\infty} t^\frac{\alpha}{n} \, \Phi^{-1} (\frac{r^\lambda}{t})\, \frac{dt}{t} \leq C_5 \,\Psi^{-1}(\frac{r^\mu}{u}) 
\quad \text{for all} \  u > 0~\text{and for all}~r>0. 
\end{equation}
\begin{itemize}
\item[$({\rm i})$] If ${\Phi}^* \in {\Delta}_2$, then $I_{\alpha}$ is bounded from $M^{\Phi, \lambda}(0)$ to $M^{\Psi, \mu}(0)$, 
that is, there exists a constant $C_6 \geq 1$ such that $\| I_{\alpha} f\|_{M^{\Psi, \mu}(0)} \leq C_6 \,  \|f\|_{M^{\Phi, \lambda}(0)}$ 
for all $f \in M^{\Phi, \lambda}(0)$. 
\item[$({\rm ii})$] The operator $I_{\alpha}$ is bounded from $M^{\Phi, \lambda}(0)$ to $WM^{\Psi, \mu}(0)$, that is, 
there exists a constant $c_6 \geq 1$ such that $\| I_{\alpha} f\|_{WM^{\Psi, \mu}(0)} \leq c_6\,  \|f\|_{M^{\Phi, \lambda}(0)}$ 
for all $f \in M^{\Phi, \lambda}(0)$. 
\end{itemize}
\end{theorem}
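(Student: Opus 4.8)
The plan is to reduce everything to a single-ball estimate and to exploit the non-translation-invariant structure of $M^{\Phi,\lambda}(0)$ through a near/far decomposition. Fix $R>0$; it suffices to bound $\|I_\alpha f\|_{\Psi,\mu,B_R}$ (for (i)) and $\|I_\alpha f\|_{\Psi,\mu,B_R,\infty}$ (for (ii)) by $C\|f\|_{M^{\Phi,\lambda}(0)}$ uniformly in $R$. Following the idea already used in Lemma~\ref{lemma2}, I would split $f=f\chi_{B_{2R}}+f\chi_{{\mathbb R}^n\setminus B_{2R}}$ and estimate the two Riesz potentials separately.

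For the far part, for $x\in B_R$ and $y\notin B_{2R}$ one has $|x-y|\ge|y|-|x|>|y|/2$, so $|I_\alpha(f\chi_{{\mathbb R}^n\setminus B_{2R}})(x)|\le 2^{n-\alpha}\int_{{\mathbb R}^n\setminus B_{2R}}|f(y)|\,|y|^{\alpha-n}\,dy$. Lemma~\ref{lemma3} bounds this by $C\|f\|_{M^{\Phi,\lambda}(0)}\int_{|B_{2R}|}^\infty t^{\alpha/n}\Phi^{-1}(t^{\lambda-1})\frac{dt}{t}$, and then assumption \eqref{Phi14} turns the integral into $C_4\Psi^{-1}(|B_{2R}|^{\mu-1})$. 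Thus the far part is dominated on $B_R$ by the constant $K_R=C\|f\|_{M^{\Phi,\lambda}(0)}\Psi^{-1}(|B_{2R}|^{\mu-1})$, whose $\Psi,\mu,B_R$-norm is $K_R\|\chi_{B_R}\|_{\Psi,\mu,B_R}=K_R/\Psi^{-1}(|B_R|^{\mu-1})$ by Lemma~\ref{lemma1}(iii). Since $\mu<1$ gives $|B_{2R}|^{\mu-1}\le|B_R|^{\mu-1}$ and $\Psi^{-1}$ is nondecreasing, this ratio is $\le 1$, so the far part contributes $\le C\|f\|_{M^{\Phi,\lambda}(0)}$; the same constant bound, together with $\|\cdot\|_{\Psi,\mu,B_R,\infty}\le\|\cdot\|_{\Psi,\mu,B_R}$ and the ideal property, handles the weak case.

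For the near part $g:=f\chi_{B_{2R}}$, I would use a Hedberg-type splitting at a scale $\delta>0$: the inner piece $\int_{|x-y|<\delta}|g(y)|\,|x-y|^{\alpha-n}\,dy\le C\delta^\alpha Mf(x)$ by decomposing $B(x,\delta)$ into dyadic annuli, while for the outer piece $\int_{|x-y|\ge\delta}$ I decompose into annuli $2^{k}\delta\le|x-y|<2^{k+1}\delta$ and estimate each $\int_{B(x,2^{k+1}\delta)\cap B_{2R}}|f|$ by the H\"older inequality Lemma~\ref{lemma1}(i) together with the characteristic-function estimate Lemma~\ref{lemma1}(ii). Recognising the resulting sum as a Riemann sum, the outer piece is bounded by $C\|f\|_{\Phi,\lambda,B_{2R}}\int_{|B_\delta|}^\infty t^{\alpha/n}\Phi^{-1}(|B_{2R}|^\lambda/t)\frac{dt}{t}$, which assumption \eqref{Orlicz15} (with $r=|B_{2R}|$, $u=|B_\delta|$) converts into $C_5\|f\|_{\Phi,\lambda,B_{2R}}\Psi^{-1}(|B_{2R}|^\mu/|B_\delta|)$. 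Hence, for every $x\in B_R$ and every $\delta>0$, $|I_\alpha g(x)|\le C\delta^\alpha Mf(x)+C\|f\|_{\Phi,\lambda,B_{2R}}\Psi^{-1}(|B_{2R}|^\mu/|B_\delta|)$.

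It remains to pass from this pointwise inequality to the norm estimate. For (i), I would choose $\delta=\delta(x)$ balancing the two terms; applying $\Psi$ and integrating over $B_R$, the conversion built into \eqref{Orlicz15} replaces $\Psi(|I_\alpha g|/C)$ by a $\Phi$-modular of $Mf$, so that the boundedness of the maximal operator on $M^{\Phi,\lambda}(0)$, i.e.\ \eqref{max} (which is exactly where $\Phi^*\in\Delta_2$ is needed), finishes the strong estimate. For (ii), I would instead choose $\delta$ so that the (constant) second term equals $u/2$; then $\{x\in B_R:|I_\alpha g(x)|>u\}\subset\{x:C\delta^\alpha Mf(x)>u/2\}$, and the weak boundedness of $M$ from $M^{\Phi,\lambda}(0)$ to $WM^{\Phi,\lambda}(0)$ controls the distribution function, giving the weak estimate without any $\Delta_2$ assumption. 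The main obstacle is precisely this near part: because $M^{\Phi,\lambda}(0)$ is not translation invariant the tail centred at $x$ cannot be compared directly with the origin-centred tail of Lemma~\ref{lemma3} (this is what forces the near/far split and explains why only the Spanne-type, and not the Adams-type, exponent relation survives), and one must reconcile the two distinct Orlicz functions $\Phi,\Psi$ and the exponents $\lambda,\mu$ through \eqref{Orlicz15} and the choice of $\delta$ before invoking the maximal-function bounds.
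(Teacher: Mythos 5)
Your proposal is correct and follows essentially the same route as the paper: the near/far split at $B_{2R}$, Lemma~\ref{lemma3} together with \eqref{Phi14} for the far part, and a Hedberg-type decomposition using Lemma~\ref{lemma1}(i)--(ii), a Riemann-sum comparison and \eqref{Orlicz15} for the near part, concluded via the (weak) boundedness of the maximal operator. The only real deviation is in part (ii), where you choose the Hedberg radius $\delta$ as a function of the level $u$ and use a level-set inclusion, while the paper keeps the pointwise choice $\delta=\delta(x)$ but normalizes $Mf$ by $\|Mf\|_{WM^{\Phi,\lambda}(0)}$ before estimating distribution functions; both variants correctly avoid $\Phi^*\in\Delta_2$ in the weak case.
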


\begin{remark} 
The same conclusions hold for non-homogeneous versions of $M^{\Phi, \lambda}(0)$ and $M^{\Psi, \mu}(0)$. 
\end{remark}

\begin{remark} 
From the estimate (\ref{Phi14}) we get the inequality (a) in Theorem \ref{necessity}(i).
Namely, using the concavity of the function $\Phi^{-1}$ we get
$$
\int\limits_u^\infty t^{\frac{\alpha}{n}} \Phi^{-1} (t^{\lambda - 1}) \frac{dt}{t} \ge 
\int\limits_u^{2u} t^{\frac{\alpha}{n}} \Phi^{-1} (t^{\lambda - 1}) \frac{dt}{t} 
\ge u^{\frac{\alpha}{n}} \Phi^{-1}((2 u)^{\lambda - 1}) \ln{2} \ge 
\frac{\ln 2}{2^{1 - \lambda}}~u^{\frac{\alpha}{n}} \Phi^{-1} (u^{\lambda - 1}).
$$
\end{remark}

\begin{remark} 
Note that if either $\lambda = \mu >  0$ or $\lambda = 0$ and $\mu > 0$, then estimate (\ref{Orlicz15}) doesn't hold.
\end{remark}

\begin{remark} 
If $\lambda = \mu = 0$, then inequalities (\ref{Phi14}) and (\ref{Orlicz15}) are the same. Moreover, condition 
(\ref{Phi14}) in this case is a sufficient condition for boundedness of $I_{\alpha}$ from Orlicz space 
$L^{\Phi}({\mathbb R^n})$ to weak Orlicz space $WL^{\Psi}({\mathbb R^n})$, and if additionally $\Phi^* \in \Delta_2$
then $I_{\alpha}$ is bounded from Orlicz space $L^{\Phi}({\mathbb R^n})$ to Orlicz space $L^{\Psi}({\mathbb R^n})$ 
(proof we can find, for example, in \cite[Theorem 3.3]{GDH17}). 
\end{remark}

In the proof of Theorem \ref{Thm3} the following lemma plays a crucial role.

\begin{lemma} \label{lemma_case1}
Let $0<\alpha <n, \Phi, \Psi$ be Orlicz functions, $\Phi^* \in \Delta_2$ and either $0 < \lambda, \mu <1, \lambda \neq \mu$ 
or $\lambda = \mu = 0$. If the estimate {\rm(\ref{Orlicz15})} holds, then there exists a constant $C_7 \geq 1$ such that 
\begin{equation*}
\int\limits_{B_r}\Psi\left( \dfrac {\int_{B_{2r}} \dfrac {|f(y)|} {|x-y|^{n-\alpha}} \,dy} {C_7 \, \|f\|_{M^{\Phi,\lambda}(0)}} \right) \,dx 
\le  |B_r|^\mu, \quad~ \text{for all}~ f \in M^{\Phi,\lambda}(0)~ \text{and}~ r>0. 
\end{equation*}
\end{lemma}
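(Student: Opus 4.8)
The plan is to rewrite the claimed modular inequality as a pointwise Hedberg-type estimate and then integrate. Writing $K=\|f\|_{M^{\Phi,\lambda}(0)}$, $F=|f|/K$ and $g_0=|f|\chi_{B_{2r}}$, the hypothesis on the Morrey norm gives $\int_{B_{2r}}\Phi(F)\,dy\le|B_{2r}|^\lambda$, and since $\operatorname{supp}g_0\subset B_{2r}$ one has $\int_{B(x,\rho)}\Phi(F)\,dy\le|B_{2r}|^\lambda$ for \emph{every} ball $B(x,\rho)$. For a fixed $x\in B_r$ and a threshold $\delta>0$ I would split $\int_{B_{2r}}|f(y)|\,|x-y|^{\alpha-n}\,dy$ into the parts $|x-y|<\delta$ and $|x-y|\ge\delta$. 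The near part is handled in the usual way by dyadic annuli shrinking to $x$, giving $\int_{|x-y|<\delta}|f(y)|\,|x-y|^{\alpha-n}\,dy\lesssim\delta^\alpha Mf(x)=\delta^\alpha K\,MF(x)$.

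For the far part the decisive point --- and the place where the \emph{central}, non-translation-invariant nature of the space must be respected --- is that I do not use the $\lambda$-weighted H\"older inequality of Lemma~\ref{lemma1} on balls centred at $x$ (which the norm does not control), but only the elementary Jensen bound. Decomposing $\{|x-y|\ge\delta\}\cap B_{2r}$ into dyadic annuli $2^j\delta\le|x-y|<2^{j+1}\delta$ and using Jensen together with $\int_{B(x,\rho)}\Phi(F)\le|B_{2r}|^\lambda$ yields $\int_{B(x,\rho)}|f|\le K\,|B(x,\rho)|\,\Phi^{-1}\!\big(|B_{2r}|^\lambda/|B(x,\rho)|\big)$. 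Summing the resulting dyadic series and comparing it with an integral produces a constant multiple of $K\int_{v_n\delta^n}^{\infty}t^{\alpha/n}\Phi^{-1}\!\big(|B_{2r}|^\lambda/t\big)\,\frac{dt}{t}$, to which I apply hypothesis \eqref{Orlicz15} with its free parameter set equal to $|B_{2r}|$. Altogether this gives the pointwise Hedberg--Orlicz estimate
$$
I_\alpha(|f|\chi_{B_{2r}})(x)\le C_8\,K\Big[\delta^\alpha MF(x)+\Psi^{-1}\!\big(|B_{2r}|^\mu/(v_n\delta^n)\big)\Big],\qquad x\in B_r,\ \delta>0.
$$

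To finish I apply $\Psi$ to $I_\alpha(|f|\chi_{B_{2r}})(x)/(2C_8K)$ and use convexity of $\Psi$ to separate the two terms; since $\Psi$ cancels the $\Psi^{-1}$, the second contributes the explicit quantity $\tfrac12|B_{2r}|^\mu/(v_n\delta^n)$. Choosing $\delta=\delta(x)$ so as to balance the two terms (Hedberg's device) leaves an expression that I integrate over $B_r$. The maximal inequality \eqref{max}, available because $\Phi^*\in\Delta_2$, controls the resulting maximal-function modular through $\int_{B_r}\Phi(MF/C_0)\le|B_r|^\lambda$, while the matching of the exponents is exactly what \eqref{Orlicz15} encodes; for the range of $x$ where $MF(x)$ is too small for the balancing $\delta(x)$ to be admissible, one falls back on the consequence $r^\alpha\Phi^{-1}(|B_r|^{\lambda-1})\lesssim\Psi^{-1}(|B_r|^{\mu-1})$ of \eqref{Orlicz15} (the inequality~(a) of Theorem~\ref{necessity}, obtained by bounding the integral below by its first dyadic block). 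The outcome is $\int_{B_r}\Psi\big(I_\alpha(|f|\chi_{B_{2r}})/(C_7K)\big)\,dx\lesssim|B_r|^\mu$, as required.

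The main obstacle is precisely the interplay of the two non-classical features. Because the norm only controls balls centred at the origin, I cannot imitate Adams' argument (weighted H\"older on balls about $x$), and the elementary Jensen substitute forces the $|B_{2r}|$-scaled form of \eqref{Orlicz15} rather than its $r=1$ (global Orlicz) case; and because only $\Phi^*\in\Delta_2$ is assumed --- so that $M$ is bounded but $\Psi$ need not satisfy $\Delta_2$ --- the final conversion must be arranged so that $\Psi$ is applied only to a genuine $\Psi^{-1}$, the residual low-order piece being absorbed through the consequence of \eqref{Orlicz15}. As a sanity check, for $\Phi(u)=u^p$ and $\Psi(u)=u^q$ all of this bookkeeping collapses to the Fu--Lin--Lu relations $1/q=1/p-\alpha/n$ and $\mu/q=\lambda/p$, which confirms that the exponents are being tracked correctly.
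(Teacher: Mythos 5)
Your preparatory steps coincide with the paper's: the same near/far split at radius $\delta$, the same maximal-function bound for the near part, and for the far part the same dyadic decomposition, the same key estimate $\int_{B(x,\rho)}|f|\,dy\le K\,|B(x,\rho)|\,\Phi^{-1}\bigl(|B_{2r}|^{\lambda}/|B(x,\rho)|\bigr)$, and the same application of \eqref{Orlicz15} with free parameter $|B_{2r}|$. Your Jensen derivation of that estimate is a correct and slightly cleaner substitute for the paper's route via Lemma \ref{lemma1}(i)--(ii); note, though, that the paper also never applies the H\"older inequality on balls centred at $x$ --- it applies it on the centred ball $B_{2r}$ with the factor $\chi_{B(x,2^k\delta)}$, which the central norm does control --- so the obstacle you describe is not actually present in either argument.

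The gap is in your closing step, and it sits exactly at the issue you flagged for $\Psi$ but overlooked for $\Phi$. Suppose you choose $\delta(x)$ by balancing, $\delta^{\alpha}MF(x)=\Psi^{-1}\bigl(|B_{2r}|^{\mu}/|B_{\delta}|\bigr)$. Then indeed $\Psi\bigl(I_{\alpha}(|f|\chi_{B_{2r}})(x)/(2C_8K)\bigr)\le |B_{2r}|^{\mu}/|B_{\delta(x)}|$, but to integrate this over $B_r$ you must express $|B_{2r}|^{\mu}/|B_{\delta(x)}|$ through $MF(x)$. The only available tool is the first-dyadic-block consequence of \eqref{Orlicz15}, namely $u^{\alpha/n}\Phi^{-1}(|B_{2r}|^{\lambda}/u)\le \frac{2C_5}{\ln 2}\Psi^{-1}(|B_{2r}|^{\mu}/u)$; inserted into the balancing equation it yields
$$
\frac{|B_{2r}|^{\mu}}{|B_{\delta(x)}|}\le |B_{2r}|^{\mu-\lambda}\,\Phi\bigl(c'\,MF(x)\bigr),
\qquad c'=\tfrac{2C_5}{v_n^{\alpha/n}\ln 2},
$$
with a constant $c'$ \emph{inside} $\Phi$ that has nothing to do with the constant $C_0$ of \eqref{max}. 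The modular information you possess is $\int_{B_r}\Phi(MF/C_0)\,dx\le|B_r|^{\lambda}$, and modulars do not rescale: since only $\Phi^{*}\in\Delta_2$ is assumed, $\Phi$ may grow exponentially (e.g.\ $\Phi(u)=e^{u}-u-1$), and then $\sup_{u>0}\Phi(c'u)/\Phi(u/C_0)=\infty$ whenever $c'C_0>1$, so no bound on $\int_{B_r}\Phi(c'\,MF)\,dx$ follows. Hedberg's balancing relies on homogeneity, which is precisely what Orlicz functions lack. The paper closes the argument with a different, non-balancing choice: $\delta$ is defined by $\Phi^{-1}\bigl(|B_{2r}|^{\lambda}/|B_{\delta}|\bigr)=Mf(x)/(C_0\|f\|_{M^{\Phi,\lambda}(0)})$, i.e.\ the maximal constant is built into $\Phi$ from the outset; the near part is then also bounded by $\tfrac{2}{\ln 2}C_5C_8C_0\,\|f\|_{M^{\Phi,\lambda}(0)}\,\Psi^{-1}(|B_{2r}|^{\mu}/|B_{\delta}|)$ via the same first-block inequality, so every unwanted constant multiplies $\Psi^{-1}$ from the \emph{outside} and can be pushed into the denominator $C_7$ by convexity ($\Psi(v/c)\le\Psi(v)/c$ for $c\ge1$), while $\Phi$ ends up applied to exactly $Mf(x)/(C_0\|f\|_{M^{\Phi,\lambda}(0)})\le Mf(x)/\|Mf\|_{M^{\Phi,\lambda}(0)}$, whose modular over $B_r$ is at most $|B_r|^{\lambda}$ by the definition of the norm. (Your fallback for ``inadmissible'' $\delta$ is then unnecessary: for $0<Mf(x)<\infty$ this defining equation always has a solution $\delta\in(0,\infty)$.)
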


\begin{proof}
Let $f \in M^{\Phi,\lambda}(0)$. We write $I_\alpha (f \chi_{B_{2r}})$ as follows
\begin{align*}
I_\alpha (f \chi_{B_{2r}})(x) &= \int_{B_{2r}} \frac {|f(y)|} {|x-y|^{n-\alpha}} \,dy 
= \int_{|x-y| \le \delta} \frac {|f(y)\chi_{B_{2r}}(y)|} {|x-y|^{n-\alpha}} \,dy \\
& + \int_{|x-y| > \delta} \frac {|f(y)\chi_{B_{2r}}(y)|} {|x-y|^{n-\alpha}} \,dy =: J_1f(x)+J_2f(x),
\end{align*}
where $\delta>0$ will be defined later on.
It is known that 
$$
J_1f(x) \le C_8 \, |B_{\delta}|^\frac{\alpha}{n} M(f\chi_{B_{2r}})(x),
$$
where $C_8 = \frac{2^\alpha}{2^\alpha - 1} \, C_3^{\prime}$.
Note that for any parameters $u > 0$ and $r>0$ we have
\begin{align*}
\int\limits_u^\infty t^{\frac{\alpha}{n}} \Phi^{-1} (\frac{r^\lambda}{t}) \frac{dt}{t} &\ge 
\int\limits_u^{2u} t^{\frac{\alpha}{n}} \Phi^{-1} (\frac{r^\lambda}{t}) \frac{dt}{t}\\
&\ge \ln{2}~ u^{\frac{\alpha}{n}} \Phi^{-1}(\frac{r^\lambda}{2u}) \ge 
\frac{\ln 2}{2}~u^{\frac{\alpha}{n}} \Phi^{-1} (\frac{r^\lambda}{u}).
\end{align*}
Thus, applying \eqref{Orlicz15} we obtain
$$
J_1f(x) \le \frac{2}{\ln 2} \, C_5 \, C_8 \, \frac{\Psi^{-1}\left(\frac{|B_{2r}|^\mu}{|B_{\delta}|}\right)}{\Phi^{-1}
\left(\frac{|B_{2r}|^\lambda}{|B_{\delta}|}\right)} M(f\chi_{B_{2r}})(x).
$$
Following Hedberg's method we get for $J_2f(x)$
\begin{align*}
J_2f(x)  &= \sum\limits_{k = 1}^\infty \, \int\limits_{2^{k-1} \delta <|x-y| \le 2^k \delta} \frac {|f(y)\chi_{B_{2r}}(y)|} {|x-y|^{n-\alpha}} \,dy\\
& \le \sum\limits_{k = 1}^\infty (2^{k-1} \delta)^{\alpha - n} \int\limits_{|x-y| \le 2^k \delta} |f(y)\chi_{B_{2r}}(y)| \,dy\\
&=\sum\limits_{k = 1}^\infty (2^{k-1} \delta)^{\alpha - n} \int\limits_{B_{2r}} |f(y)\chi_{B(x,2^k \delta)}(y)| \,dy.
\end{align*}
From Lemma \ref{lemma1} (i) and (ii) it follows that
\begin{align*}
J_2f(x)  &\le 2 \, |B_{2r}|^\lambda \|f\|_{\Phi, \lambda, B_{2r}} \sum\limits_{k = 1}^\infty (2^{k-1} \delta)^{\alpha - n} 
\|\chi_{B(x,2^k\delta)}\|_{\Phi^\ast, \lambda, B_{2r}}\\
&\le 2^{n-\alpha+1} \|f\|_{\Phi, \lambda, B_{2r}} \sum\limits_{k = 1}^\infty (2^k \delta)^{\alpha - n} |B_{2r} \cap 
B(x,2^k \delta)| \Phi^{-1} \left(\frac{|B_{2r}|^\lambda}{|B_{2r} \cap B(x,2^k \delta)|} \right).
\end{align*}
Taking into account that $u\Phi^{-1}(1/u)$ is increasing and $|B_{2r} \cap B(x,2^k \delta)| \le |B(x,2^k \delta)|$ we obtain
\begin{align*}
J_2 f(x) &\le 2^{n-\alpha+1} \|f\|_{\Phi, \lambda, B_{2r}} \sum\limits_{k = 1}^\infty (2^k \delta)^{\alpha - n} \Phi^{-1} 
\left(\frac{|B_{2r}|^\lambda}{|B(x,2^k\delta)|} \right) |B(x,2^k \delta)|\\
& = 2^{n-\alpha+1} v_n \, \|f\|_{\Phi, \lambda, B_{2r}} \sum\limits_{k = 1}^\infty (2^k \delta)^{\alpha} \Phi^{-1} 
\left(\frac{|B_{2r}|^\lambda}{|B_{2^k \delta}|} \right)\\
& =\frac{C_3^{\prime}}{n \ln{2}} \, \|f\|_{\Phi, \lambda, B_{2r}} \sum\limits_{k = 1}^\infty |B_{2^k\delta}|^{\frac{\alpha}{n}} \Phi^{-1}
 \left(\frac{|B_{2r}|^\lambda}{|B_{2^k\delta}|}\right) \int\limits_{|B_{2^{k-1}\delta}|}^{|B_{2^k \delta}|}\frac{dt}{t}\\
&\le \frac{C_3^{\prime}}{n \ln{2}} \, \|f\|_{\Phi, \lambda, B_{2r}} \sum\limits_{k = 1}^\infty |B_{2^{k}\delta}|^{\frac{\alpha}{n}} 
\int\limits_{|B_{2^{k-1}\delta}|}^{|B_{2^k \delta}|}\Phi^{-1}\left(\frac{|B_{2r}|^\lambda}{t}\right) \frac{dt}{t}\\
&\le C_3 \, \|f\|_{\Phi, \lambda, B_{2r}} \int\limits_{|B_{\delta}|}^\infty t^{\frac{\alpha}{n}} \Phi^{-1} 
\left(\frac{|B_{2r}|^\lambda}{t}\right) \frac{dt}{t}\\
&\le C_5 \, C_3 \, \|f\|_{M^{\Phi, \lambda}(0)} \Psi^{-1}\left(\frac{|B_{2r}|^\mu}{|B_{\delta}|}\right).
\end{align*}
Now we choose $\delta>0$ such that
$$
\frac{Mf(x)}{C_0 \,\|f\|_{M^{\Phi, \lambda}(0)}} = \Phi^{-1}\left(\frac{|B_{2r}|^\lambda}{|B_\delta|}\right),
$$
where the constant $C_0$ is from (\ref{max}). Then
$$
J_1 f(x) \leq \frac{2}{\ln 2} \, C_5 \, C_8 \, C_0\, \| f \|_{M^{\Phi, \lambda}(0)} \,\Psi^{-1}\left(\frac{|B_{2r}|^\mu}{|B_{\delta}|}\right),
$$
and 
\begin{align*}
& \int_{B_{2r}} \dfrac {|f(y)|} {|x-y|^{n-\alpha}} \,dy = J_1 f(x) + J_2 f(x) \\
&\le  \left( \frac{2}{\ln 2} \, C_5 \, C_8 \, C_0 + C_5 C_3 \right) \,  \| f \|_{M^{\Phi, \lambda}(0)} 
\,\Psi^{-1}\left(\frac{|B_{2r}|^\mu}{|B_{\delta}|}\right)
\end{align*}
Thus, with $C_{9} = 2\, C_5 \max\left(\frac{2}{\ln 2} C_0 C_8,\, C_3\right)$ we obtain
\begin{align*}
 \int_{B_{2r}} \dfrac {|f(y)|} {|x-y|^{n-\alpha}} \,dy &\le C_{9} \|f\|_{M^{\Phi, \lambda}(0)} \Psi^{-1} \left(|B_{2r}|^{\mu-\lambda} 
\Phi \left( \frac{Mf(x)}{C_0\|f\|_{M^{\Phi, \lambda}(0)}}\right)\right).
\end{align*}
Then 
\begin{align*}
\Psi \left(\frac{\int_{B_{2r}} \dfrac {|f(y)|} {|x-y|^{n-\alpha}} \,dy}{C_{9} \|f\|_{M^{\Phi, \lambda}(0)}} \right)& \le 
|B_{2r}|^{\mu-\lambda} \,\Phi\left(\frac{Mf(x)}{\|Mf\|_{M^{\Phi, \lambda}(0)}}\right)\\
 &= 2^{n(\mu-\lambda)} \, |B_r|^{\mu - \lambda} \, \Phi\left(\frac{Mf(x)}{\|Mf\|_{M^{\Phi, \lambda}(0)}}\right).
\end{align*}
Finally, with $C_7 = 2^{n(\mu-\lambda)} C_{9}$ we get
\begin{align*}
\frac{1}{|B_r|^\mu}\int\limits_{B_r}\Psi \left(\frac{\int_{B_{2r}} \dfrac {|f(y)|} {|x-y|^{n-\alpha}} \,dy}{C_7 \, 
\|f\|_{M^{\Phi, \lambda}(0)}} \right) \,dx & 
\le \frac{1}{|B_r|^\lambda} \int\limits_{B_r} \Phi\left(\frac{Mf(x)}{\|Mf\|_{M^{\Phi, \lambda}(0)}}\right) \,dx \le 1
\end{align*}
and we arrive to the statement of this lemma.
\end{proof}

\begin{proof}[Proof of Theorem \ref{Thm3}]  
(i) Let $0< \alpha <n$ and $0 \le \lambda < 1, 0 < \mu <1$. Let also $f \in M^{\Phi,\lambda}(0)$ and $r >0.$ 
Since $I_{\alpha}f$ is well-defined by Lemma \ref{lemma2}, we prove only that 
$$
\|I_{\alpha}f\|_{M^{\Psi,\mu}(0)} \leq C_6 \, \|f\|_{M^{\Phi,\lambda}(0)}. 
$$
Now, by (\ref{defined}), for $C_6 =2 \,\max(C_7,\, 2^{n-\alpha}C_3\,C_4)$, it follows that 
\begin{eqnarray*}
&& \int_{B_r} \Psi \left( \dfrac {|I_{\alpha}f(x)|}{C_6 \, \|f\|_{M^{\Phi,\lambda}(0)}} \right)\,dx  \\
&& \quad \ \ \ \leq \ 
\dfrac{1}{2} \int_{B_r} \Psi \left( \dfrac {|I_{\alpha}(f\chi_{B_{2r}})(x)|} {C_7 \|f\|_{M^{\Phi,\lambda}(0)}} \right)\,dx 
  + \dfrac{1}{2} \int_{B_r} \Psi \left( \dfrac {|I_{\alpha}(f(1-\chi_{B_{2r}}))(x)|} {2^{n-\alpha} \,C_3 \,C_4\, 
 \|f\|_{M^{\Phi,\lambda}(0)}} \right)\,dx \\
&& \quad \ \ \ =: \ \dfrac{1}{2} \left( I_1 + I_2 \right). 
\end{eqnarray*}
From Lemma \ref{lemma_case1} we get that $I_1 \le |B_r|^\mu$ for all $r >0$ .

Next, we estimate $I_2$. Since for $x \in B_r$ and $|y| \geq 2 r$ we have $|x| < r \leq \frac{|y|}{2}$ and 
$| x - y| \geq |y| - |x| > \frac{|y|}{2}$ it follows that 
\begin{equation} \label{A}
|I_{\alpha}(f(1-\chi_{B_{2r}}))(x)| \leq \int_{{\mathbb R^n}\setminus B_{2r}} \dfrac {|f(y)|} {|x - y|^{n-\alpha}} \,dy 
\leq 2^{n-\alpha} \, \int_{{\mathbb R^n}\setminus B_{2r}} \dfrac {|f(y)|} {|y|^{n-\alpha}} \,dy. 
\end{equation}
By Lemma \ref{lemma3} and the estimate (\ref{Phi14}) we obtain
\begin{eqnarray*}
\Psi \left( \dfrac {\int_{{\mathbb R^n}\setminus B_{2r}} \dfrac {|f(y)|} {|y|^{n-\alpha}} \,dy}{C_3 \,C_4 \, \|f\|_{M^{\Phi,\lambda}(0)}}  \right) \,dx 
&\leq & 
\Psi \left( \frac{1}{C_4} \int_{|B_{2r}|} t^{\alpha/n} \Phi^{-1}(t^{\lambda -1}) \frac{dt}{t}\right) \\
&\leq &\
\Psi \left(\Psi^{-1}( |B_{2r}|^{\mu-1})\right) \leq |B_{2r}|^{\mu-1}. 
\end{eqnarray*}
Thus, for $x \in B_r$ 
\begin{equation*}
I_2 
\leq \int_{B_r} \Psi \left( \dfrac {\int_{{\mathbb R^n}\setminus B_{2r}} \dfrac {|f(y)|} {|y|^{n-\alpha}} \,dy}{C_3 \,
C_4 \, \|f\|_{M^{\Phi,\lambda}(0)}} \right) \,dx \leq |B_{2r}|^{\mu-1} \cdot |B_r| < |B_r|^{\mu}. 
\end{equation*}
Hence, 
\begin{equation*}
\dfrac 1 {|B_r|^{\mu}} \int_{B_r} \Psi \left(\dfrac {|I_{\alpha}f(x)|} {C_6 \, \|f\|_{M^{\Phi,\lambda}(0)}} \right)\,dx < 1,
\end{equation*}
and so
\begin{equation*}
\|I_{\alpha}f\|_{M^{\Psi,\mu}(0)} \leq C_6 \, \|f\|_{M^{\Phi,\lambda}(0)}. 
\end{equation*}

(ii) Similarly to the previous case, by (\ref{defined}), we obtain for $u > 0$
\begin{eqnarray*}
&& \Psi \left( \dfrac {|I_{\alpha}f(x)|}{c_{6} \, \|f\|_{M^{\Phi,\lambda}(0)}} \right) \\
&& \quad \ \ \ \leq \ 
\dfrac{1}{2}  \Psi \left( \dfrac {|I_{\alpha}(f\chi_{B_{2r}})(x)|} {c_7\,\|f\|_{M^{\Phi,\lambda}(0)}} \right) 
  + \dfrac{1}{2} \Psi \left( \dfrac {|I_{\alpha}(f(1-\chi_{B_{2r}}))(x)|} {2^{n-\alpha+1} \, C_3 \, C_4 \, \|f\|_{M^{\Phi,\lambda}(0)}} \right) \\
&& \quad \ \ \ =: \ \dfrac{1}{2} \left( I_3 + I_4 \right), 
\end{eqnarray*}
with $c_6 = 2 \,\max(c_7,\, 2^{n-\alpha+1}\,C_3\, C_4, c_7 = 2^{n(\mu-\lambda)+1} c_9$ and
 $c_9 = 2\, C_5 \max\left(\frac 2 {\ln 2} c_0 \,C_8, C_3 \right)$.
 
 Since $\Psi (u) \, d(g, u) = v \, d(g, \Psi^{-1}(v)) = v \, d(\Psi(g), v)$ for any $u > 0$ with $v = \Psi (u)$ and 
 $$
 d\left(\Psi\left(\frac{|I_\alpha f (x)|}{c_{6} \|f\|_{M^{\Phi, \lambda}(0)}}\right), u\right) 
 \leq d \left(I_3, u\right) + d \left(I_4, u\right),
 $$
it follows that
\begin{eqnarray*}
\sup\limits_{u>0} \frac {\Psi(u)}{{|B_r|^\mu}} \, d\left(\frac{|I_\alpha f (x)|}{c_{6}\|f\|_{M^{\Phi,\lambda}(0)}}, u\right) 
&\le& \sup\limits_{u>0} \frac u {|B_r|^\mu} \, d\left(I_3, u \right) + \sup\limits_{u>0} \frac u {|B_r|^\mu} \, d\left(I_4, u \right).
\end{eqnarray*}
From the proof of Lemma \ref{lemma_case1} for all $r >0$ 
$$
I_3 = \Psi \left( \dfrac {|I_{\alpha}(f\chi_{B_{2r}})(x)|} {c_7 \, \|f\|_{M^{\Phi,\lambda}(0)}} \right) 
\le \frac 1 2 |B_r|^{\mu - \lambda} \,\Phi\left(\frac{Mf(x)}{\|Mf\|_{WM^{\Phi,\lambda}(0)}}\right)
$$
and
\begin{eqnarray*}
\sup\limits_{u>0} \frac u {|B_r|^\mu} \, d\left(I_1, u \right) &\le& \sup\limits_{u>0} \frac u {|B_r|^\mu} \,
d \left(\frac 1 2 |B_r|^{\mu-\lambda} \Phi\left(\frac{Mf(x)}{\|Mf\|_{WM^{\Phi,\lambda}(0)}}\right), u \right)\\
&=& \frac 1 2 \sup\limits_{u>0} \frac u {|B_r|^\lambda} \, d \left(\Phi\left(\frac{Mf(x)}{\|Mf\|_{WM^{\Phi,\lambda}(0)}}\right), u \right)\\
&=& \frac 1 2 \sup\limits_{u>0} \frac {\Phi(u)}{{|B_r|^\lambda}}\, d \left(\frac{Mf(x)}{\|Mf\|_{WM^{\Phi,\lambda}(0)}}, u \right) \le \frac 1 2.
\end{eqnarray*}
For $I_4$, using Lemma \ref{lemma3} we obtain
$$
I_4 = \Psi \left( \dfrac {|I_{\alpha}(f(1-\chi_{B_{2r}}))(x)|} {2^{n-\alpha+1} \, C_3 \, C_4 \, \|f\|_{M^{\Phi,\lambda}(0)}} \right) 
\le \frac{1}{2} |B_r|^{\mu-1}
$$
and
$$
\sup\limits_{u>0} \frac u {|B_r|^\mu} \, d\left(I_4, u \right) \le \sup\limits_{u>0} \frac u {|B_r|^\mu} \,
d \left(\frac 1 2 |B_r|^{\mu-1}, u \right) = \frac{1}{2} \sup\limits_{u>0} u~ d\left(\frac 1 {|B_r|},u\right) \le \frac{1}{2}.
$$
Thus,
$$
\sup\limits_{u>0} \frac{\Psi(u)} {{|B_r|^\mu}} \, d\left(\frac{|I_\alpha f (x)|}{c_6 \|f\|_{M^{\Phi,\lambda}(0)}}, u\right) \le 1
$$
and $\|I_{\alpha}f\|_{WM^{\Psi,\mu}(0)} \leq c_{6} \, \|f\|_{M^{\Phi,\lambda}(0)}.$ 
\end{proof}

\begin{example}
Let $0<\alpha<n, 1 < p < \frac{n(1-\lambda)}{\alpha}, 0 \leq \lambda < 1,$ and 
$$
\Phi(u) = u^p, \quad \Psi(u) = u^q \quad \text{with} \quad 1 < p < q < \infty.
$$ 
Then ${\Phi}^*(u) = (p - 1) \, p^{- p^{\prime}} u^{p^{\prime}}$, where $1/p + 1/p^{\prime} = 1$ and ${\Phi}^*(2u) = 2^{p^{\prime}} {\Phi}^*(u)$, 
that is, ${\Phi}^* \in {\Delta}_2$. The estimate (\ref{Phi14}) holds since 
$$
\int_u^{\infty} t^{\alpha/n} \Phi^{-1}(t^{\lambda-1}) \, \frac{dt}{t} = \int_u^{\infty} t^{\frac{\alpha}{n} + \frac{\lambda-1}{p}} \, \frac{dt}{t} 
= \frac{1}{\frac{1 - \lambda}{p} - \frac{\alpha}{n}} \, u^{\frac{\alpha}{n} + \frac{\lambda - 1}{p}}
$$
for all $u > 0$, where the last integral is convergent because $ p < \frac{n(1-\lambda)}{\alpha}$. If $ \frac{1}{q} = \frac{1}{p} - \frac{\alpha}{n}$ 
and $\frac{\lambda}{p} = \frac{\mu}{q}$, then $\frac{\alpha}{n} + \frac{\lambda - 1}{p} = \frac{\lambda}{p} - (\frac{1}{p} - \frac{\alpha}{n}) 
= \frac{\mu}{q} - \frac{1}{q}$ and 
$$
\int_u^{\infty} t^{\alpha/n} \Phi^{-1}(t^{\lambda-1}) \, \frac{dt}{t} = \frac{q}{1 - \mu} \, u^{\frac{\mu - 1}{q}} = \frac{q}{1 - \mu} \, \Psi^{-1}(u^{\mu-1}), 
$$ 
that is, the estimate (\ref{Phi14}) holds. Also estimate (\ref{Orlicz15}) holds since for all $u, r > 0$
$$
\int_u^{\infty} t^{\frac{\alpha}{n}} \Phi^{-1}(\frac{r^{\lambda}}{t}) \, \frac{dt}{t} = r^{\frac{\lambda}{p}} 
 \int_u^{\infty} t^{\frac{\alpha}{n} - \frac{1}{p}} \, \frac{dt}{t} = \frac{ r^{\frac{\lambda}{p}}}{\frac{1}{p} - \frac{\alpha}{n} } 
 u^{\frac{\alpha}{n} - \frac{1}{p}} = q \, r^{\frac{\mu}{q}} u^{-1/q} = q \, \Psi^{-1}(\frac{r^{\mu}}{u}).
$$
From the Theorem \ref{Thm3} we get the Spanne--Peetre type result proved in \cite[Proposition 1.1]{FLL08}, that is, 
the Riesz potential $I_\alpha$ is bounded from $M^{p,\lambda}(0)$ to $M^{q,\mu}(0)$ under the conditions 
$1 < p < \frac{n(1-\lambda)}{\alpha}, 0 \leq \lambda < 1, \frac{1}{q} = \frac 1 p - \frac{\alpha}{n}$ and $\frac{\lambda}{p} 
= \frac{\mu}{q}$.
\end{example}

\begin{remark} \label{rem3}
It is easy to see that for $0 \leq \lambda < 1$ if $\Phi_1, \Phi_2$ are two Orlicz functions and there exists a constant 
$k > 0$ such that $\Phi_2(u) \leq \Phi_1(k u)$ for all $u > 0$, then $\| f\|_{\Phi_2, \lambda,A } \leq k \, \| f\|_{\Phi_1, \lambda,A } $ 
provided the right side is finite. Furthemore, 
$ M^{\Phi_1, \lambda}({\mathbb R^n}) \overset{k}{\hookrightarrow} M^{\Phi_2, \lambda}({\mathbb R^n})$ and 
$M^{\Phi_1, \lambda}(0) \overset{k}{\hookrightarrow} M^{\Phi_2, \lambda}(0)$. Hence it follows that if two 
Orlicz functions $\Phi_1, \Phi_2$ are equivalent, i.e. there exist positive constants $k_1,k_2$ such that 
$\Phi_1(k_1 u) \leq \Phi_2(u) \leq \Phi_1(k_2 u)$ for all $u > 0$, then $ M^{\Phi_1, \lambda}({\mathbb R^n}) = 
M^{\Phi_2, \lambda}({\mathbb R^n})$ and $M^{\Phi_1, \lambda}(0) = M^{\Phi_2, \lambda}(0)$ with equivalent 
norms.
\end{remark}

\begin{example} \label{ex2}
Let $0 < \alpha < n, 0 \leq \lambda < 1, 1 < p < \frac{n(1-\lambda)}{\alpha}, a > 0$ and 
\begin{equation*} 
\Phi^{-1}(u) = 
\begin{cases}
u^{\frac 1 p} & \text{for}~0 \leq u \leq 1,\\
u^{\frac 1 p} \left(1 + \ln u \right)^{-a} & \text{for}~u \geq1,
\end{cases}
\quad \Psi^{-1}(u) =  u^{\frac{1}{q}} ~~ \text{with} ~ 1 < p < q < \infty.
\end{equation*}
If $\frac{1}{q} = \frac{1}{p} - \frac{\alpha}{n}, \frac{\lambda}{p} = \frac{\mu}{q}$, then condition 
(\ref{Phi14}) is satisfied. Really, for $u \geq 1$ we have equality as in the Example 1. If $0 < u < 1$, then 
using the fact that function $ \left(1 + \ln t^{\lambda - 1} \right)^{-a} $ is strictly increasing of variable $t$ on $(0, 1]$,  
we get $ \left(1 + \ln t^{\lambda - 1} \right)^{-a} \leq 1$ for $0 < t \leq 1$ and so
\begin{eqnarray*}
 \int_u^{\infty} t^{\alpha/n} \Phi^{-1}(t^{\lambda-1}) \, \frac{dt}{t} &=& \int_u^{1} t^{\frac{\alpha}{n} + \frac{\lambda-1}{p}} 
 (1 + \ln t^{\lambda-1})^{-a} \, \frac{dt}{t} +  \int_1^{\infty} t^{\frac{\alpha}{n} + \frac{\lambda-1}{p}} \, \frac{dt}{t} \\
&\leq&  \int_u^{\infty}  t^{\frac{\alpha}{n} + \frac{\lambda-1}{p}} \, \frac{dt}{t} = 
\frac{u^{\frac{\alpha}{n} + \frac{\lambda - 1}{p}}}{\frac{1 - \lambda}{p} - \frac{\alpha}{n}} = \frac{q}{1 - \mu} \, u^{\frac{\mu - 1}{q}} 
= \frac{q}{1 - \mu} \, \Psi^{-1}(u^{\mu-1}),
\end{eqnarray*}
that is, the estimate (\ref{Phi14}) holds. Next, we consider condition (\ref{Orlicz15}). If $u \geq r^{\lambda}$, then 
$$
\int_u^{\infty} t^{\frac{\alpha}{n}} \Phi^{-1}(\frac{r^{\lambda}}{t}) \, \frac{dt}{t} = r^{\frac{\lambda}{p}} 
 \int_u^{\infty} t^{\frac{\alpha}{n} - \frac{1}{p}} \, \frac{dt}{t} = \frac{ r^{\frac{\lambda}{p}}}{\frac{1}{p} - \frac{\alpha}{n} } 
\, u^{\frac{\alpha}{n} - \frac{1}{p}} = q \, r^{\frac{\mu}{q}} u^{- \frac{1}{q}} = q \, 
\Psi^{-1}(\frac{r^{\mu}}{u}).
$$
Let now $0 < u < r^{\lambda}$. Then, $(1 + \ln\frac{r^{\lambda}}{t})^{-a} \leq 1$ as an increasing function of 
$t$ on $(0, r^{\lambda}]$ and since $u < t \leq r^{\lambda}$, we have
\begin{eqnarray*}
\int_u^{\infty} t^{\frac{\alpha}{n}} \Phi^{-1}(\frac{r^{\lambda}}{t}) \, \frac{dt}{t} &=& r^{\frac{\lambda}{p}} 
 \int_u^{r^{\lambda}} t^{\frac{\alpha}{n} - \frac{1}{p}} (1 + \ln \frac{r^{\lambda}}{t})^{-a}\, \frac{dt}{t} + 
 r^{\frac{\lambda}{p}} \int_{r^{\lambda}}^{\infty} t^{\frac{\alpha}{n} - \frac{1}{p}} \, \frac{dt}{t}\\
 &\leq& r^{\frac{\lambda}{p}} \int_u^{r^{\lambda}} t^{\frac{\alpha}{n} - \frac{1}{p}} \, \frac{dt}{t} +  
 r^{\frac{\lambda}{p}} \int_{r^{\lambda}}^{\infty} t^{\frac{\alpha}{n} - \frac{1}{p}} \, \frac{dt}{t} \\
 &=& r^{\frac{\lambda}{p}} \int_u^{\infty} t^{\frac{\alpha}{n} - \frac{1}{p} } \, \frac{dt}{t} = 
\frac{ r^{\frac{\lambda}{p}}}{\frac{1}{p} - \frac{\alpha}{n} } \, u^{\frac{\alpha}{n} - \frac{1}{p}} 
= q \, r^{\frac{\mu}{q}} u^{- \frac{1}{q}} = q \, \Psi^{-1}(\frac{r^{\mu}}{u}),
\end{eqnarray*}
that is, the estimate (\ref{Orlicz15}) holds. The function $\Phi^{-1}$ is increasing, unbounded, obviously 
concave on $(0, 1)$ and concave for large $u$. Therefore, there exists a concave function on $(0, \infty)$ 
which is equivalent to $\Phi^{-1}$ and so $\Phi$ is equivalent to an Orlicz function. Also we have equivalence
$$
\Phi (u) \approx 
\begin{cases}
u^p & \text{for}~0 \le u \le 1,\\
u^p \left(1 + \ln u \right)^{a p} & \text{for}~u \ge 1.
\end{cases}
$$
Moreover, since
 $$
s_{\Phi^{-1}}(t) = 
\begin{cases}
 t^{1/p} (1 - \ln t)^{a} & \text{for}~0 < t \le 1,\\
 t^{1/p}  & \text{for}~t\ge 1,
\end{cases}
$$
it follows that the Matuszewska--Orlicz index $\beta_{\Phi^{- 1}} = \frac{1}{p}$ and so 
$1 = \frac{1}{\beta_{\Phi^*}} + \frac{1}{\alpha_{\Phi}} = \frac{1}{\beta_{\Phi^*}} +\beta_{\Phi^{- 1}} 
= \frac{1}{\beta_{\Phi^*}} + \frac{1}{p}$ or $\beta_{\Phi^*} = \frac{p}{p - 1} < \infty$, which means that 
$\Phi^* \in \Delta_2$ (for definitions and properties of indices -- see \cite[pp. 87--89]{Ma89}). Thus, 
by Remark \ref{rem3}, the space $M^{\Phi, \lambda}(0)$ is a Banach space and by Theorem \ref{Thm3} 
the Riesz potential $I_\alpha$ is bounded from $M^{\Phi,\lambda}(0)$ to $M^{\Psi,\mu}(0) = M^{q, \mu}(0)$.
\end{example}

\begin{example} \label{ex3}
Let $0 < \alpha < n, 0 \leq \lambda < 1, 1 < p < \frac{n(1-\lambda)}{\alpha}, 0 \leq b \leq a$ and 
\begin{equation*} 
\Phi^{-1}(u) = u^{\frac 1 p} \left(1 + |\ln u| \right)^{-a} ~~ \text{and} ~~ \Psi^{-1}(u) = u^{\frac 1 q} \left(1 + |\ln u| \right)^b 
~~ \text{for} ~~ u >0.
\end{equation*}
If $\frac{1}{q} = \frac{1}{p} - \frac{\alpha}{n}, \frac{\lambda}{p} = \frac{\mu}{q}$, then conditions (a), (b) of 
Theorem \ref{necessity}(i) and (\ref{Phi14}), (\ref{Orlicz15}) are satisfied. 
The calculations are similar to those in Example \ref{ex2} so we will omit them here. Observe only that
 $$
s_{\Phi^{-1}}(t) = t^{1/p} (1 + |\ln t|)^a, \, s_{\Psi^{-1}}(t) = t^{1/q} (1 + |\ln t|)^b.
$$
Then, the functions $\Phi^{-1}, \Psi^{-1}$ are increasing, unbounded and concave near $0$ and for large $u$, 
and so the inverses $\Phi, \Psi$ are equivalent to Orlicz functions. Thus, by Remark \ref{rem3}, the spaces 
$M^{\Phi, \lambda}(0), M^{\Psi, \mu}(0)$ are Banach spaces and by Theorem \ref{Thm3} 
the Riesz potential $I_\alpha$ is bounded from $M^{\Phi,\lambda}(0)$ to $M^{\Psi,\mu}(0)$.
\end{example}

\begin{acknowledgement}
\rm{The third author was supported by Grant-in-Aid for Scientific Research (C), No. 17K05306, No. 20K03663, 
Japan Society for the Promotion of Science. }
\end{acknowledgement}


\vspace{3mm}

\noindent
{\footnotesize Department of Engineering Sciences and Mathematics\\
Lule\r{a} University of Technology, SE-971 87 Lule\r{a}, Sweden\\
~{\it E-mail address: {\tt evgeniya.burtseva@ltu.se}}\\
}

\vspace{-2mm}

\noindent
{\footnotesize Department of Engineering Sciences and Mathematics\\
Lule\r{a} University of Technology, SE-971 87 Lule\r{a}, Sweden\\
~{\it E-mail address: {\tt lech.maligranda@ltu.se}}\\
and\\
{\footnotesize Institute of Mathematics, Pozna\'n University of Technology\\
ul. Piotrowo 3a, 60-965 Pozna\'n, Poland\\
~{\it E-mail address: {\tt lech.maligranda@put.poznan.pl}}\\
}}

\vspace{-2mm}

\noindent
{\footnotesize College of Economics, Nihon University\\
1-3-2 Kanda, Misaki-cho, Chiyoda-ku, Tokyo 101-8360, Japan\\ 
{\it E-mail address: {\tt katsu.m@nihon-u.ac.jp}} \\
}

\end{document}